\DeclareMathAlphabet{\mathpzc}{OT1}{pzc}{m}{it}
\newtheorem{assumption}{Assumption}
\newenvironment{itemize*}%
  {\begin{itemize}%
    \setlength{\itemsep}{0pt}%
    \setlength{\parskip}{0pt}}%
  {\end{itemize}}
\begin{document}
\sloppy
\title{Regularized Sample Average Approximation for High-Dimensional Stochastic Optimization Under Low-Rankness}

\author{\name Hongcheng Liu \email liu.h@ufl.edu \\
\name Charles Hernandez \email cdhernandez@ufl.edu \\
\name Hung Yi Lee \email hungyilee@ufl.edu \\
       \addr Department of Industrial and Systems Engineering\\
       University of Florida\\
       Gainesville, FL 32611, USA
      }

\editor{ }
\singlespacing
\maketitle


\begin{abstract} 
This paper concerns a high-dimensional stochastic programming problem of minimizing a function of expected cost with a matrix argument. To this problem, one of the most widely applied solution paradigms is the sample average approximation (SAA), which uses  the average cost over  sampled scenarios as a surrogate to approximate the expected cost. 
Traditional   SAA theories require the sample size to grow rapidly when the problem dimensionality increases. Indeed, for a problem of optimizing over a $p$-by-$p$ matrix, the sample complexity of the SAA is given by $\tilde O(1)\cdot \frac{p^2}{\epsilon^2}\cdot{polylog}(\frac{1}{\epsilon})$ to achieve an $\epsilon$-suboptimality gap, for some poly-logarithmic function ${polylog}(\,\cdot\,)$ and some quantity $\tilde O(1)$ independent of dimensionality $p$ and sample size $n$.  In contrast, this paper considers a regularized SAA (RSAA) with a low-rankness-inducing penalty.  We demonstrate that the sample complexity of RSAA is  $\tilde O(1)\cdot \frac{p}{\epsilon^3}\cdot {polylog}(p,\,\frac{1}{\epsilon})$, which is almost linear in $p$ and thus indicates a substantially lower dependence on dimensionality. Therefore,   RSAA can be more advantageous than SAA especially for larger scale and higher dimensional problems.  Due to the close correspondence between stochastic programming and statistical learning, our results also indicate that high-dimensional low-rank matrix recovery is possible generally beyond a linear model, even if the common assumption of restricted strong convexity is completely absent.
\end{abstract}

\bigskip
\begin{keywords}
Stochastic optimization,  MCP, folded concave penalty, sample average approximation, high dimensionality, sparsity, low-rankness
\end{keywords}
\clearpage

\section{Introduction}

As dimensionality inflates in modern applications of stochastic programming (SP) in order to generate more comprehensive and higher-granular  decisions,  the sample average approximation (SAA), which is traditionally a  common solution paradigm for  SP, sometimes tends to be  demanding for sample availability. The current SAA theories as per \cite{Shapiro:book}, \cite{Shapiro2003a}, \cite{Shapiro2003b} and \cite{Shapiro2007} require that the number of samples should always be greater than  the number of decision variables; for optimizing over a $p$-by-$p$ matrix, the sample size $n$ should grow at least quadratically in $p$. 
Such  sample size requirement may be undesirably costly in certain high-dimensional applications. Recently,   a regularized SAA with sparsity-inducing penalty has been studied by  \cite{Liu2018}, which shows that  significant reduction of sample size requirement may be achieved by exploiting sparse structures in the problem. This current paper then seeks to generalize the result therein to the settings where sparsity is replaced by a low-rankness assumption. We will show that a similar level of success can be achieved.

The particular problem of focus is stated as follows: Let   $Z\in \mathcal W$, for some $\mathcal W\subseteq\Re^q$ and  $q>0$, be a random vector. 
 Consider a measurable, deterministic function $ f:\,\mathcal S_{p}\times \mathcal W\rightarrow \Re$ where $\mathcal S_p$ is the cone of $p$-by-$p$ ($p\geq 1$) symmetric and positive semidefinite matrices and  $f(\mathbf X, Z)$ is a cost function with respect to parameter $Z$ and a fixed matrix of decision variables ${\mathbf X}$.  Then the problem of consideration is an SP problem  given as
 \begin{align}
{\mathbf X}^*\in\,{\arg\min}\,\left\{\mathbb F({\mathbf X}):\,\mathbf X\in \mathcal S_p \right\}.\label{original problem}
\end{align}
where $\mathbb F({\mathbf X})=\mathbb E[f(\mathbf X,Z)]$ is well-defined and finite-valued for any given $\mathbf X\in\mathcal S_p$. Assume, hereafter, that $\sigma_{\max}({\mathbf X}^*)\leq R$ for some constant $R\geq 1$, where $\sigma_{\max}(\cdot)$ denotes the spectral radius. With some abuse of terminology, we say that the dimensionality of this problem is $p$, since the unknown is a $p$-by-$p$ matrix.
 We refer to this optimization problem as the ``true problem'' and $\mathbf X^*$ as the ``true solution'', as they assume the exact knowledge of the underlying distribution and the admissibility of calculating the multi-dimensional integration involved in evaluating the expected cost.   We would like to remark that \eqref{original problem} subsumes the unconstrained problems since any symmetric matrix can be represented by the difference between two symmetric and positive semidefinite matrices. Furthermore, also subsumed by  \eqref{original problem} are problems with  non-symmetric and non-square matrices $\mathbf X$,   since  they can be transformed into symmetric matrices by the self-adjoint dilation with $\bar{\mathbf X}=\left[\begin{matrix}\mathbf 0&\mathbf X\\ \mathbf X^\top&\mathbf 0\end{matrix}\right]$ for some all-zero matrices $\mathbf 0$'s with proper dimensions. 

Hereafter, let  $\mathbf Z_1^n=(Z_1,...,Z_n)$ be a sequence of $n$-many i.i.d.\,random samples of $Z$. To solve Problem \eqref{original problem}, one of the most popular solution schemes, as mentioned above, is to invoke the following SAA formulation as a surrogate:
 \begin{align}
{\mathbf X}^{SAA}\in\,{\arg\min}\,\left\{\mathcal F_n({\mathbf X},\,\mathbf Z_1^n):=\frac{1}{n}\sum_{i=1}^nf(\mathbf X, Z_i):\,\mathbf X\in \mathcal S_p \right\}.\label{original problem 2}
\end{align}
According to the seminal results by \cite{Shapiro:book}, ${\mathbf X}^{SAA}$ well approximates $\mathbf X^*$ in the sense that 
\begin{align}
\mathbb F({\mathbf X}^{SAA})-\mathbb F(\mathbf X^*)\leq \tilde O(1)\cdot\sqrt{\frac{p^2\cdot\ln n}{n}}\label{traditional bound}
\end{align}
with high probability, where $\tilde O(\cdot)$ is some quantity  that is independent of  $p$ and $n$. Thus, to ensure the same suboptimality gap, it stipulates that the sample size, $n$, must grow qradratically if  $p$ increases. For an SP problem where ${\mathbf X}^*$ is sparse and $f$ is twice-differentiable almost surely, we have shown in \cite{Liu2018} that \eqref{traditional bound} can be  sharpened, in terms of its dependence on $p$, into:
\begin{align}
\mathbb F(\mathbf X^{RSAA})-\mathbb F(\mathbf X^*)\leq \tilde O(1)\cdot {\frac{\sqrt{\ln (np)}}{n^{1/4}}}\label{sparsity bound},
\end{align}
with high probability, where $\mathbf X^{RSAA}$ is an SAA scheme with sparsity-inducing regularization. Similar (and potentially stronger) results than the above  have been reported by  \cite{Liu2019b} and \cite{Liu2019} in the context of high-dimensional statistical and machine learning under a sparsity assumption and/or its limited variations.

In contrast, this paper  provides a substantial generalization to  \cite{Liu2018,Liu2019b,Liu2019} by weakening the sparsity and twice-differentiability assumptions simultaneously to low-rankness and continuous differentiability. Particularly, our low-rankness assumption is as below:
\begin{assumption}\label{approximate low-rankness}
The rank ${\bf rk}(\,\cdot\,)$ of ${\mathbf X}^*$  in the problem \eqref{original problem} satisfies $s:=\mathbf{rk}({\mathbf X}^*)\ll p$ for some $s\geq 1$. 
 \end{assumption}
The above low-rankness assumption is more general than the sparsity assumption of a vector, since any vector $\boldsymbol x$ can be represented by a diagonal matrix, $diag(\boldsymbol x)$, whose diagonal entries equal to $\boldsymbol x$. Then, sparsity of $\boldsymbol x$ implies that $diag(\boldsymbol x)$ is of low rank.  
Furthermore, we generalize the assumption twice-differentiability to   Lipschitz continuity of the partial derivatives of $f$ w.r.t.\ the eigenvalues of the input matrix, as we will discuss in more detail subsequently.
 
  For this more general problem, our solution paradigm modifies the SAA into the following regularized SAA (RSAA):
 \begin{align}
{\mathbf X}^{RSAA}\in\,\underset{\mathbf X\in\mathcal S_p}{\arg\min}\,\left\{\mathcal F_{n,\lambda}({\mathbf X},\,\mathbf Z_1^n)\vphantom{\frac{1}{n}}:=\mathcal F_{n}({\mathbf X},\,\mathbf Z_1^n)+\sum_{j=1}^{p}P_\lambda( \sigma_j(\mathbf X))\right\},\label{original problem 2}
\end{align}
where $ \sigma_j(\mathbf X)$ stands for the $j$th eigenvalue of $\mathbf X$ and $P_\lambda$ is a penalty function in the form of the minimax concave penalty (MCP) \citep{Zhang2010} given as $P_\lambda(x)=\int_{0}^x\frac{[a\lambda-t]_+}{a}dt$, for some user-specific tuning parameters $a,\,\lambda>0$. Here $[\,\cdot\,]_+=\max\{\,\cdot\,, 0\}$. The MCP is a mainstream special form of the folded concave penalty (FCP) first proposed by \cite{Fan2001}.


Under the above settings, the RSAA formulation is nonconvex and its global solutions are elusive. To ensure computability, this paper considers stationary points that satisfy a set of significant subspace second-order necessary conditions (S$^3$ONC), given as in Definition \ref{SONC definition} in the subsequent. The S$^3$ONC herein is an extension to a similar notion presented by \cite{Liu2017, Liu2018} and  is a special case than the canonical second-order KKT conditions. Hence, any second-order (local optimization) algorithm that computes a second-order KKT solution ensures the S$^3$ONC. The resulting computational effort of an S$^3$ONC solution (a solution that satisfies the S$^3$ONC) is likely tractable.

Let $ {\mathbf X}^{\ell_1}_{\lambda}$ be defined as 
\begin{align}
    {\mathbf X}^{\ell_1}_{\lambda}\in {\arg\min}_{{\mathbf X}\in\mathcal S_p}\, \mathcal F_n({\mathbf X},\, \mathbf Z_1^n)+\lambda\Vert {\mathbf X} \Vert_*,\label{nuclear problem}
 \end{align}
  with $\Vert \cdot \Vert_*$ denoting the nuclear norm.
 We show that, under a few standard assumptions in addition to Assumptions \ref{approximate low-rankness}, for any S$^3$ONC solution  to the RSAA, denoted  $\mathbf X^{RSAA}$, which satisfies $\mathcal F_{n,\lambda}(\mathbf X^{RSAA})\leq \mathcal F_{n,\lambda}( {\mathbf X}^{\ell_1}_{\lambda})$ a.s., it holds that 
\begin{align}
\mathbb F({\mathbf X}^{RSAA})-\mathbb F(\mathbf X^*)\leq \tilde O(1)  \cdot\left(\frac{s\cdot p^{2/3}}{n^{2/3}}+\frac{s\cdot p^{1/3}}{n^{1/3}}\right)\cdot \ln(np)
\label{new log bound},
\end{align}
with overwhelming probability, when our knowledge on the rank of $\mathbf X^*$ is completely absent.  Furthermore, if we allow the penalty parameter to incorporate knowledge on the rank $s$ of $\mathbf X^*$, as  in Assumption \ref{approximate low-rankness}, then a better choice of $\lambda$ allows that
\begin{align}
\mathbb F({\mathbf X}^{RSAA})-\mathbb F(\mathbf X^*)\leq \tilde O(1)  \cdot\left(\frac{s^{1/3}\cdot p^{2/3}}{n^{2/3}}+\frac{s^{2/3}\cdot p^{1/3}}{n^{1/3}}\right)\cdot \ln(np),
\label{new log bound 2}
\end{align}
with overwhelming probability, where  the sample size requirement has a lower dependence on $s$ of $\mathbf X^*$ compared to \eqref{new log bound}. 
  The above results are then the promised, almost linear, sample complexity;  from both \eqref{new log bound} and \eqref{new log bound 2}, $n$ should only increase almost linearly in $p$ to compensate the growth in dimensionality.  This indicates that the RSAA   would be much more advantageous than the SAA especially for problems with higher dimensions. To compute the desired solution ${\mathbf X}^{RSAA}$, one may invoke an S$^3$ONC-guaranteeing algorithm initialized at $ {\mathbf X}^{\ell_1}_{\lambda}$. Meanwhile, the initial solution, $ {\mathbf X}^{\ell_1}_{\lambda}$, is often polynomial-time computable when $f(\,\cdot,\,w)$ is convex for almost every $w\in\mathcal W$ (although the convexity of $f(\,\cdot,\,w)$ is not necessary to prove the almost linear sample complexity).

To our knowledge, our paper presents the first SAA variant that ensures a sample complexity that is  almost linear in dimensionality under low-rankness.  Even though similar   results have been achieved previously, e.g., by  \cite{Negahban2012}, \cite{Rohde and Tsybakov 2011} and \cite{Elsener and van de geer 2018} in the context of  high-dimensional low-rank matrix estimation, most of the existing  results assume the presence of restricted strong convexity (RSC) or its variations. While the RSC is deemed generally plausible for statistical and/or machine learning, such type of assumptions are often  not satisfied by stochastic programming. Furthermore, due to the correspondence between the SAA and matrix estimation problems,  our results may also imply that high-dimensional matrix estimation is generally possible under the low-rankness assumption; even if the conditions such as the RSC or alike are completely absent, the MCP-based regularization may still ensure a sound generalization error as measured by the excess risk, which coincides in  formulation with the suboptimality gap in minimizing the SP. In addition, our results do not assume a linear or generalized linear model in data generation. 
 Even though a few other likely more important error bounds  are unavailable herein but are presented by \cite{Negahban2012}, \cite{Rohde and Tsybakov 2011} and \cite{Elsener and van de geer 2018} (most of whom  focus more on linear or generalized linear models under RSC or alike), we believe that the  excess risk is still an important out-of-sample performance measure commonly employed by, e.g., \cite{excess risk}, \cite{Koltchinskii 2010}, and \cite{Clemencon}. 

The rest of the paper is organized as follows: Section \ref{main results} presents our assumptions and main results. Section \ref{General proof} presents the general road map for our proof and major schemes employed. Section \ref{conclusions} then concludes our paper. All technical proofs are presented in the appendix.
\subsection{Notations}
Throughout this paper, we denote by $\Vert\cdot \Vert$ the 2-norm of a vector, by $\sigma_{\max}(\cdot)$ the spectral norm, by $\Vert\cdot\Vert_*$ the nuclear norm, and by $\Vert\cdot\Vert_{\mathbf p}$ the $\mathbf p$-norm (with $1\leq\mathbf p\leq \infty$). Let $\sigma_j(\mathbf X)$ be the $j$th singular value of matrix $\mathbf X$. Denote by $\Vert \cdot\Vert_F$ the Frobenius norm.

\section{Sample complexity of the regularized SAA under low-rankness}\label{main results}

This section presents our main results in Subsection \ref{main results section} after we introduce  our assumptions in Subsection \ref{assumptions here} as well as the definition of the S$^3$ONC in Subsection \ref{S3ONC section}.

\subsection{Assumptions.}\label{assumptions here}
In addition to the low-rankness structure as in Assumption \ref{approximate low-rankness}, we will make the following additional assumptions about continuous differentiability (Assumption \ref{Upper bound}), the tail of the underlying distribution (Assumption \ref{sub exponential condition}), and  a Lipschitz-like continuity (Assumption \ref{Lipschitz condition}).

 \begin{assumption}\label{Upper bound} Let $\mathcal U_L\geq 1$. Assume that  
\begin{align} \left\vert\left.\frac{\partial  f({\mathbf X},\, z)}{\partial \sigma_j({\mathbf X})}\right\vert_{\mathbf X=\mathbf X_1}-\left.\frac{\partial  f({\mathbf X},\, z)}{\partial \sigma_j({\mathbf X})}\right\vert_{\mathbf X=\mathbf X_2}\right\vert<\mathcal U_L\cdot \left\vert \sigma_j(\mathbf X_1)-\sigma_j(\mathbf X_2)\right\vert\label{continuity gradient}
\end{align} for every $j=1,...,p$, all $\mathbf X_1,\,\mathbf X_2\in\mathcal S_p$, and  almost every $z\in \mathcal W$.
\end{assumption}

 
\begin{assumption}\label{sub exponential condition}
The family of random variables, $f({\mathbf X},\, Z_i)-\mathbb E[ f({\mathbf X},\, Z_i)]$, $i=1,...,n$, are independent and follow sub-exponential distributions; that is
$$
\Vert  f({\mathbf X},\, Z_i)-\mathbb E[  f({\mathbf X},\, Z_i)]\Vert_{\psi_1}\leq K,
$$
for some $K\geq 1$ for
 all $\mathbf X\in\mathcal S_p:\,\sigma_{\max}(\mathbf X)\leq R$, where $\Vert\cdot\Vert_{\psi_1}$ is the sub-exponential norm.
\end{assumption} 

Invoking the well-known Bernstein-type inequality, one has that,  for all $\mathbf X\in\mathcal S_p$,  it holds that
\begin{multline}
\mathbb P\left(\left\vert \sum_{i=1}^n a_i\left\{f(\mathbf X,\, Z_i)-\mathbb E[f(\mathbf X,\, Z_i)] \right\}\right\vert>K(\Vert \mathbf a\Vert\sqrt{t}+\Vert \mathbf a\Vert_\infty t)\right)\leq 2\exp\left(-ct\right),
\\\forall t\geq 0, \,\mathbf a=(a_i)\in\Re^n,\label{sub-exponential sequence sum}
\end{multline}
for some absolute constant $c\in(0,\,\frac{1}{2}]$.  \cite[See also][]{Rudelson2013}.

\begin{assumption}\label{Lipschitz condition}
For some measurable and deterministic function $\mathcal C:\, \mathcal W\rightarrow \Re$  with $\mathbb E[\vert \mathcal C(Z)\vert]\leq \mathcal  C_\mu,$ for some $\mathcal  C_\mu\geq 1$, the random variable $\mathcal C(Z)$  satisfies that $\left\Vert \mathcal C(Z)-\mathbb E\left[\mathcal C(Z)\right]\right\Vert_{\psi_1}\leq K_C$ for some $ K_C\geq 1$. Furthermore, $
\vert  f({\mathbf X}_1,\, z)-f({\mathbf X}_2,\, z)\vert\leq \mathcal C(z)\Vert {\mathbf X}_1-{\mathbf X}_2\Vert$
 for all ${\mathbf X}_1,\,{\mathbf X}_2\in\mathcal S_p$, and almost every  $z\in\mathcal W.
$
\end{assumption}
\begin{remark}
Assumption \ref{Upper bound} is easily verifiable and applies to a flexible set of SP problems. 
Assumptions \ref{sub exponential condition} and \ref{Lipschitz condition} are standard, and, by a close examination, it is essentially equivalent to the assumptions made by \cite{Shapiro:book} in the analysis of the traditional SAA.
\end{remark}

\subsection{The significant subspace second-order necessary conditions}\label{S3ONC section}

Our sample complexity results concern critical points that satisfy the S$^3$ONC as per the following definition, where we notice that $P_\lambda(t)$ is twice differentiable for all $t\in(0,\,a\lambda)$.

\begin{definition}\label{SONC definition}
For given $\mathbf Z_1^n\in\mathcal W^n$, a vector $\hat{{\mathbf X}}\in\mathcal S_p$ is said to satisfy the S$^3$ONC  (denoted by {\it S$^3$ONC}$(\mathbf Z_1^n)$) of the problem \eqref{original problem 2} if both of the following  sets of conditions are satisfied:

\begin{enumerate} 
\item[a.] The first-order KKT condition is satisfied at $\mathbf X^{RSAA}$; that is,
\begin{align}
\nabla\mathcal F_{n,\lambda}(\mathbf X^{RSAA},\, \mathbf Z_1^n) =0,\label{first-order kkt}
\end{align}
where $\nabla\mathcal F_{n,\lambda}(\mathbf X^{RSAA},\, \mathbf Z_1^n)$ is the gradient of $\mathcal F_{n,\lambda}(\mathbf X^{RSAA},\, \mathbf Z_1^n)$ at  $\mathbf X^{RSAA}$.
\item[b.] The following inequality holds at $\mathbf X^{RSAA}$ for all $j=1,...,p$:
\begin{align}
 \mathcal U_L+\left[\frac{\partial^2 P_\lambda(\sigma_j(\mathbf X))}{[\partial \sigma_j(\mathbf X)]^2}\right]_{\mathbf X=\mathbf X^{RSAA}}\geq 0,\quad\text{if $\sigma_j(\mathbf X^{RSAA})\in (0,\,a\lambda)$},\label{second condition 1}
\end{align}
where $\mathcal U_L$ is as defined  in  \eqref{continuity gradient} for Assumption \ref{Upper bound}.
 
\end{enumerate}
\end{definition}

As mentioned, the above S$^3$ONC is verifiably a weaker condition than the canonical second-order KKT conditions. Therefore, any local optimization algorithm that guarantees the second-order KKT conditions will necessarily ensure the S$^3$ONC.  

\subsection{Main results}\label{main results section}
Introduce a few short-hand notations:
Denote $\tilde\Delta:=\ln\left(18R\cdot(K_C+\mathcal  C_\mu)\right)$ and $\lambda(\rho):=\sqrt{\frac{8K(2p+1)^{2/3}s^{-\rho}}{c\cdot a\cdot n^{2/3}}[\ln(n^{1/3}p)+\tilde\Delta]}$, for the same $c$ in \eqref{sub-exponential sequence sum} and a user-specific $\rho\geq 0$.  
Recall the definition of $ {\mathbf X}^{\ell_1}_{\lambda}$  in \eqref{nuclear problem} and
specify  $a^{-1}=2\mathcal U_L$ (and thus $a<{\mathcal U_L}^{-1}$.  We are now ready to present our claimed results.

.
\begin{theorem}\label{first theorem}  Suppose that Assumptions 1 through 4 hold.    Specify the penalty parameter $\lambda:=\lambda(\rho)$. Let $\mathbf X^{RSAA}\in\mathcal S_p:\sigma_{\max}(\mathbf X^{RSAA})\leq R$ satisfy the S$^3$ONC$(\mathbf Z_1^n)$   to \eqref{original problem 2}     almost surely.  For any $\Gamma\geq 0$ and some    universal constants $\tilde c,\,C_1>0$, if
\begin{align}
n > C_1\cdot s^{3\rho}\cdot \left[\left(\frac{\Gamma}{K}\right)^{3}+1\right]\cdot p+C_1\cdot s\cdot p\cdot\left(\ln(n^{1/3}p)+\tilde \Delta\right),\label{sample initial requirement 2 3}
\end{align}
 and   $\mathcal F_{n,\lambda}(\mathbf X^{RSAA},\,\mathbf Z_1^n)\leq \mathcal F_{n,\lambda}({\mathbf X}^{*},\,\mathbf Z_1^n)+\Gamma$ almost surely, then the  excess risk is bounded by 
\begin{multline}
\mathbb F(\mathbf X^{RSAA})-\mathbb  F({\mathbf X^*})\leq \sqrt{\frac{K\cdot s^\rho\cdot p^{1/3}\cdot \Gamma}{n^{1/3}}} +\Gamma
\\ +C_1K\cdot\left[\frac{s^{1-\rho}\cdot p^{2/3}\cdot \left(\ln(n^{1/3}p)+\tilde \Delta\right)}{n^{2/3}}+\sqrt{\frac{s \cdot p\cdot  \left(\ln(n^{1/3}p)+\tilde \Delta\right)}{n}}+\frac{p^{1/3}\cdot s^\rho}{n^{1/3}}\right],\label{bound global 3}
\end{multline}
with probability at least 
$
1-2 (p+1) \exp(-\tilde cn)-6\exp\left(-2c(2p+1)^{2/3}n^{1/3}\right)
$.
\end{theorem}
\begin{proof}
See proof in Section \ref{proof of the desired result}.
\end{proof}

\begin{remark}
 Some   explanations on the notations are below:
\begin{enumerate}[leftmargin=12pt,labelsep=5pt]
\item $\Gamma$ measures the solution quality in solving the (in-sample) RSAA formulation; that is, $\Gamma$ is the suboptimality gap of minimizing the RSAA, which is the surrogate model for the true SP problem in \eqref{original problem}. We refer to $\Gamma$   as ``in-sample suboptimality gap'' hereafter. 
\item More important to us is a second type of suboptimality gap, which we refer to as the ``out-of-sample suboptimality gap'', calculated as $\mathbb F(\mathbf X)-\mathbb  F({\mathbf X^*})$ for a feasible solution $\mathbf X$. The out-of-sample suboptimality gap measures how well the solution $\mathbf X$ optimizes the true SP problem in \eqref{original problem}.
\item $\tilde \Delta$ is some logarithmic terms independent of $p$ and $n$.
\item $K$ and $K_C$ are subexponential norms of the underlying distributions. They are alternative measures of the distributions' variances. 
\end{enumerate}
\end{remark}
\begin{remark}
Some intuitions on the above theorem are as follows:
\begin{enumerate}[leftmargin=12pt,labelsep=5pt]
\item Theorem \ref{first theorem} ensures that all S$^3$ONC solutions to the RSAA formulation yield a bounded out-of-sample suboptimality gap in minimizing the true problem \eqref{original problem}.  
\item Furthermore, the out-of-sample suboptimality gap is consistent with the in-sample suboptimality gap in the sense that  the former deteriorates as $\Gamma$ increases.  When $\Gamma$ is relatively large, the deterioration is dominated by a linear rate. 
\end{enumerate}
\end{remark}

We may well control the in-sample suboptimality gap $\Gamma$ by properly initializing the search for an S$^3$ONC solution. Indeed, as is shown in the corollary below, using $ {\mathbf X}^{\ell_1}_{\lambda}$  defined in \eqref{nuclear problem} to warm-start any S$^3$ONC-guaranteeing local optimization algorithm ensures the promised sample complexity.
\begin{corollary}\label{first corollary}
   Suppose that Assumptions 1 through 4 hold.    Specify the penalty parameter  $\lambda=\lambda(0)$ (that is, $\rho=0$) in both formulations \eqref{nuclear problem} and \eqref{original problem 2}. Let $\mathbf X^{RSAA}\in\mathcal S_p:\sigma_{\max}(\mathbf X^{RSAA})\leq R$ satisfy the S$^3$ONC$(\mathbf Z_1^n)$   to \eqref{original problem 2}     almost surely.   For  some    universal constant $\tilde c,\,C_2>0$, if
\begin{align}
n >  C_2\cdot p\cdot \mathcal U_L\cdot [\ln(n^{\frac{1}{3}}p)+\tilde \Delta]\cdot s^{ \frac{3}{2} }R^{\frac{3}{2}},\label{sample initial requirement 2} 
\end{align}
 and   \begin{align}\mathcal F_{n,\lambda}(\mathbf X^{RSAA},\,\mathbf Z_1^n)\leq \mathcal F_{n,\lambda}( {\mathbf X}^{\ell_1}_{\lambda},\,\mathbf Z_1^n)\label{initializer}
 \end{align}
  almost surely, where $ {\mathbf X}^{\ell_1}_{\lambda}$ is as defined in \eqref{nuclear problem}, then the excess risk is bounded by 
\begin{multline}
\mathbb F(\mathbf X^{RSAA})-\mathbb  F({\mathbf X^*}) 
\\\leq C_2\cdot s\cdot K\cdot\left[\frac{p^{2/3} \left(\ln(n^{\frac{1}{3}}p)+\tilde \Delta\right)}{n^{\frac{2}{3}}}+\frac{p^{1/3}R\cdot \mathcal U_L^{1/2}\sqrt{\ln(n^{\frac{1}{3}}p)+\tilde \Delta}}{n^{\frac{1}{3}}}\right],\label{bound 123}
\end{multline}
with probability at least 
$1-2 (p+1) \exp(-\tilde cn)-6\exp\left(-2c(2p+1)^{2/3}n^{1/3}\right)$.
\end{corollary}
\begin{proof}
See proof in Section \ref{proof of the desired result corollary 1}.
\end{proof}


\begin{remark}
We would like to make a few remarks on the above result:
\begin{enumerate}[leftmargin=12pt,labelsep=5pt]
\item Corollary \ref{first corollary} above  establishes our claimed result of almost linear complexity at an S$^3$ONC solution generated with a proper initialization.

\item
The same corollary considers the particular sublevel set that has a better objective value (in terms of RSAA formulation) than $ {\mathbf X}^{\ell_1}_{\lambda}$. In such a case, the suboptimality in minimizing the true problem \eqref{original problem} explicitly vanishes as sample size $n$ increases.  
\item  $ {\mathbf X}^{\ell_1}_{\lambda}$ is an initial  solution often tractably computable under the common assumption that $f(\,\cdot\,,z)$ is convex for almost every $z\in\mathcal W$.   However, our results in Theorem \ref{first theorem} is not contingent on the convexity of $f(\,\cdot\,,z)$, although generating $ {\mathbf X}^{\ell_1}_{\lambda}$ may be intractable when convexity of $f(\,\cdot\,,z)$ is not in presence.

\item 
Corollary \ref{first corollary} above is consistent with the claimed sample complexity in \eqref{new log bound}, which is almost linear in $p$. Indeed, for achieving an accuracy of $\epsilon$, the above bounds indicate a sample complexity $\tilde O(1)\cdot \frac{p}{\epsilon^3}\cdot \text{\it polylog}(p,\,\frac{1}{\epsilon})$, which is almost linear in $p$, for some quantities $\tilde O(1)$ that is independent of $n$, $\epsilon$, and $p$. 
 \end{enumerate}
\end{remark}

We note that the dependence of sample size $n$ on rank $s$ of the true solution $\mathbf X^*$ is cubic, which means that the proposed approach is more powerful when the true solution $\mathbf X^*$ is of very low rank. The deterioration may be fast when $s$ increases. Nonetheless, we believe it possible to significantly reduce the order on $s$ if any further information below is given: (i) If the $\mathcal F_n$ or $\mathbb F$ satisfies strong convexity or its certain relaxed forms, dependence on $s$ is likely reducible, as it has been successful for \cite{Liu2018} in stochastic optimization under sparsity. (ii) If the value of $s$ can be coarsely predicted in the sense that $O(1)\cdot s$ for some universal constant $O(1)$ is given, then one may also properly modify the value of $\lambda$ to decrease the dependence on $s$. We will consider the relatively special case in (i) in future study. Nonetheless, our claim in (ii) above is provided in Corollary \ref{second corollary} below.

\begin{corollary}\label{second corollary}
Suppose that Assumptions 1 through 4 hold.    Specify the penalty parameter  $\lambda=\lambda(\frac{2}{3})$ (that is, $\rho=\frac{2}{3}$) in both formulations \eqref{nuclear problem} and \eqref{original problem 2}. Let $\mathbf X^{RSAA}\in\mathcal S_p:\sigma_{\max}(\mathbf X^{RSAA})\leq R$ satisfy the S$^3$ONC$(\mathbf Z_1^n)$   to \eqref{original problem 2}     almost surely.  For  some    universal constant $\tilde c,\,C_3>0$, if
\begin{align}
n >  C_3\cdot p\cdot \mathcal U_L\cdot [\ln(n^{\frac{1}{3}}p)+\tilde \Delta]\cdot s^{ 2 }\cdot R^{\frac{3}{2}},\label{sample initial requirement 2 new 234} 
\end{align}
 and   and \eqref{initializer} holds  almost surely, where $ {\mathbf X}^{\ell_1}_{\lambda}$ is as defined in \eqref{nuclear problem}, then the excess risk is bounded by 
\begin{multline}
\mathbb F(\mathbf X^{RSAA})-\mathbb  F({\mathbf X^*}) 
\\\leq C_3\cdot   K\cdot\left[\frac{s^{1/3}p^{2/3} \left(\ln(n^{\frac{1}{3}}p)+\tilde \Delta\right)}{n^{\frac{2}{3}}}+\frac{s^{2/3}p^{1/3}\cdot R\cdot \mathcal U_L^{1/2}\cdot \sqrt{\ln(n^{\frac{1}{3}}p)+\tilde \Delta}}{n^{\frac{1}{3}}}\right],\label{bound 12345}
\end{multline}
with probability at least 
$1-2 (p+1) \exp(-\tilde cn)-6\exp\left(-2c(2p+1)^{2/3}n^{1/3}\right)$.

\end{corollary}
\begin{proof}
See proof in Section \ref{proof of the desired result corollary 2}.
\end{proof}

\begin{remark}
The Corollary \ref{second corollary}, similar to Corollary \ref{first corollary},   establishes our claimed result of almost linear complexity at a computable S$^3$ONC solution generated with a proper initialization, $ {\mathbf X}^{\ell_1}_{\lambda}$, which can be tractable when $f(\,\cdot\,,z)$ is convex for almost every $z$. \end{remark}

\begin{remark}
In contrast to Corollary \ref{first corollary}, Corollary \ref{second corollary} yields a sample complexity with much reduced dependence on $s$; quadratic instead of cubic in $s$. We suppose that this dependence is no longer improvable. This is because, even if we are given the exact knowledge to correctly reduce the ``redundant'' dimensions of the problem, the  traditional SAA to the reduced problem will still require a sample size quadratically dependent on $s$.
\end{remark}

\begin{remark}
There is strong correspondence between the SP and statistical learning as formerly noted by \cite{Liu2018,MandB2011}. More specifically, the SAA formulation \eqref{original problem 2} can be considered as  an M-estimation problem and the suboptimality gap $\mathbb F(\mathbf X^{RSAA})-\mathbb F({{\mathbf X}^*})$ has the same formulation as the excess risk discussed by  \cite{excess risk}, \cite{Koltchinskii 2010}, and \cite{Clemencon}. We therefore argue that the results in Theorem \eqref{first theorem} and Corollaries \ref{first corollary} and \ref{second corollary} indicate that M-estimation with  high dimensions is generally possible under a low-rankness assumption. In particular, since our analysis does not assume any form of RSC, we believe that our results then provides perhaps the first out-of-sample performance guarantee for high-dimensional low-rank estimation beyond RSC.
\end{remark}

\begin{remark}
We would like to remark again that, to obtain the desired results, the incurred computational ramification can be reasonably small. This is because $\mathbf X^{RSAA}$ is only a stationary point that satisfies \eqref{initializer}. First, the stationarity can be ensure by invoking local optimization algorithms. Second, the stipulated inequality in \eqref{initializer} can be ensured by initializing the local algorithm with $ {\mathbf X}^{\ell_1}_{\lambda}$. Such an initializer often can be generated within polynomial time  under the common assumption that $f(\,\cdot\,,w)$ is convex for almost every $w\in\mathcal W$, although the convexity of $f(\,\cdot\,,w)$ is not necessary for proving the claimed almost linear sample complexity.
\end{remark}

\section{Proof Overview and Techniques}\label{General proof}
\subsection{General ideas}
The general idea of our proof is straightforward and focuses on addressing the question: {\it how to show that an S$^3$ONC solution has low rank.} If this question is answered, then the desired results can be almost evident by analyzing the  $\epsilon$-net   for all  the low-rank subspaces. Such an analysis is available in Lemma 3.1 of \cite{Candes2011} and is restated (with minor modifications) in Lemma \ref{low rank epsilon net} herein.

To that end, we utilize a unique property of the MCP function, which ensure that the stationary points that satisfy the S$^3$ONC solutions $\mathbf X^{RSAA}$ must obey a thresholding rule: for all the singular values, they must be either 0 or greater than $a\lambda$. This means that for each nonzero singular value in the S$^3$ONC solution $\mathbf X^{RSAA}$, an additional penalty of value $\frac{a\lambda^2}{2}$ is added to the objective function of the RSAA, and, therefore, the total penalty incurred by the low-rankness-inducing regularization is $\sum_{j=1}^pP_\lambda(\sigma_j(\mathbf X^{RSAA}))={{\bf rk}(\mathbf X^{RSAA})}\cdot\frac{a\lambda^2}{2}$. Now, consider those stationary points whose suboptimality gaps (in minimizing the RSAA)  are smaller than a user-specific quantity $\Gamma$, and therefore,  $\mathcal F_{n,\lambda}(\mathbf X^{RSAA},\mathbf Z_1^n)=\mathcal F_{n}(\mathbf X^{RSAA},\mathbf Z_1^n)+{{\bf rk}(\mathbf X^{RSAA})}\cdot\frac{a\lambda^2}{2}\leq \mathcal F_{n,\lambda}(\mathbf X^{*},\mathbf Z_1^n)+\Gamma$. The rank of such $\mathbf X^{RSAA}$ rank must be bounded from above by a function of $\Gamma$. Such a function can be explicated via a peeling technique discussed by \citep{Raskutti}. Some relative details are provided below.

\subsection{Proof Roadmap}
The proof of Theorem \ref{first theorem} is motivated by \cite{Liu2019} but  involves substantial generalization from an SP problem under sparsity in  \cite{Liu2019}  to an SP problem under low-rankness herein. To understand the non-trivial step involved in this generalization,  one may observe the fundamental differences between those two problems: While low-rankness can be represented by sparsity via a linear transformation,  the linear operator involved in this transformation is completely unknown. More specifically, by singular value decomposition, one may write $\mathbf X^*:=UD^*V^\top$ for some proper unitary matrices $U$ and $V$. Apparently, as per Assumption \ref{approximate low-rankness}, the diagonal matrix $D^*$ must be sparse and  $U^\top$ and $V$ are linear operators that project $\mathbf X^*$ to  a sparse domain; indeed, $D^*=U^\top \mathbf X^*V$.   Nonetheless, the knowledge on $U^\top$ and $V$  are completely absent, which leads to significant ramifications in analysis. 

The following are general explanations on the key steps, where $\tilde O(1)$'s denote (potentially different) quantities that are independent of $p$ and $n$:
\begin{description}[style=unboxed,leftmargin=0cm]
\item[Step 1. {\it The thresholding rule of the MCP.}] Under the assumption that  $\mathcal U_L<a^{-1}$, in Proposition \ref{Proposition 1}, we show that, for an S$^3$ONC solution to the RSAA formulation, denoted $\mathbf X^{RSAA}$,  a thresholding rule of $\sigma_j(\mathbf X^{RSAA})$, for all $j$, is that $\sigma_j(\mathbf X^{RSAA})\neq 0\Longrightarrow \sigma_j(\mathbf X^{RSAA})\geq a\lambda$, where $a$ and $\lambda$ are the tuning parameters of the MCP function, $P_\lambda$. This can be demonstrated by observing that  the definition of the S$^3$ONC, which is $\mathcal U_L-P''_\lambda(\sigma_j(\mathbf X^{RSAA}))=\mathcal U_L-\frac{1}{a}\geq 0$ if $\sigma_j(\mathbf X^{RSAA})\in(0,\,a\lambda)$,   contradicts with the assumption that $\mathcal U_L<a^{-1}$. Therefore, it holds that $\sigma_j(\mathbf X^{RSAA})\geq a\lambda$, unless $\sigma_j(\mathbf X^{RSAA})=0$.
\item[Step 2. {\it $\epsilon$-net argument for  low-rank subspaces.}]  We apply the well-known $\epsilon$-net argument to show a point-wise error bound for $\vert\mathcal F_{n,\lambda}(\mathbf X,\mathbf Z_1^n)-\mathbb F(\mathbf X)\vert\leq \epsilon$  for all $\mathbf X\in\mathcal S_p: \sigma_{\max}(\mathbf X)\leq R$ in all rank-$\tilde p$ subspaces, whose elements have rank no greater than a given  $\tilde p$. To that end,  first observe that, for any rank-$\tilde p$ subspace, the standard $\epsilon$-net argument results in a covering number of $\tilde O(1)\left(  \sqrt{\tilde p}\cdot\frac{\tilde O(1)}{\epsilon}\right)^{(2p+1)\tilde p}$. Second, since there can be   ${p}\choose{\tilde p}$-many rank-$\tilde p$ subspaces, the total covering number for all possible rank-$\tilde p$ subspaces is $${{p}\choose{\tilde p}}\cdot \left(  \sqrt{\tilde p}\cdot\frac{\tilde O(1)}{\epsilon}\right)^{\tilde p}\leq \left(  \tilde O(1)\cdot\frac{p}{\epsilon}\right)^{(2p+1)\tilde p}.$$ Combining this covering number, the Bernstein-like inequality, and Lipschitz-like inequality in \eqref{sub-exponential sequence sum}, we have  that, for any $t\geq 0$,
\begin{multline}\vert\mathcal F_{n}(\mathbf X,\mathbf Z_1^n)-\mathbb F(\mathbf X)\vert
> \tilde O(1)\cdot\frac{t}{n} +\tilde O(1)\cdot\sqrt{\frac{t}{n}} + \epsilon,
\\~\text{$\forall\,\mathbf X\in\mathcal S_p:\,\sigma_{\max}(\mathbf X)\leq R:\,{\bf rk}(\mathbf X)\leq \tilde p$},\label{to show bound prob}
\end{multline} with probability at most $\left( \tilde O(1)\cdot \frac{p}{\epsilon}\right)^{(2p+1)\tilde p}\exp(-ct)+ \exp(-\tilde O(1)\cdot n)$ for some universal constant $c\in(0,1/2]$. We may choose   to let $t=2 \tilde p(2p+1)\ln\left(\frac{\tilde O(1)\cdot p}{\epsilon}\right)$, as well as $\epsilon=n^{-\frac{1}{3}}$, then, observe that the probability the fact (we will call it {\bf Observation ($\star$)}, to be useful later in Step 4) that the first term in the probability is vanishing exponentially fast to zero as $\tilde p$ increases and the second term is independent of $\tilde p$.
\item[Step 3. {\it An implication of Step 2.}] Let $\mathbf X^{RSAA}$  be an S$^3$ONC solution to the RSAA formulation in \eqref{original problem 2}. Assume that $\mathbf X^{RSAA}$ is within the $\Gamma$-sublevel set for some $\Gamma\geq 0$. Then, (cf. Assumption \ref{approximate low-rankness}) it is straightforward to obtain  from the fact that $0\leq P_\lambda(\,\cdot\,)\leq \frac{a\lambda^2}{2}$ and the results of Step 1 (i.e., $ \sigma_j(\mathbf X)\geq a\lambda$, unless $\sigma_j(\mathbf X)=0$),
\begin{align}\mathcal F_{n}(\mathbf X^{RSAA},\mathbf Z_1^n) +{\bf rk}(\mathbf X^{RSAA})\cdot \frac{a\lambda^2}{2}\leq  \mathcal F_{n}(\mathbf X^*,\mathbf Z_1^n)+\frac{a\lambda^2\cdot s}{2}+\Gamma.\label{test and obtain new result}
\end{align}
  If ${\bf rk}(\mathbf X^{RSAA})\leq \tilde p$, the result from Step 2 can be invoked to bound the differences, $\mathbb F(\mathbf X^{RSAA}) -\mathcal F_{n}(\mathbf X^{RSAA},\mathbf Z_1^n) $ and $\mathcal F_{n}(\mathbf X^{*},\mathbf Z_1^n)-\mathbb F(\mathbf X^{*})$, to be smaller than a desired level. In particular, as we choose to let $t=2 \tilde p(2p+1)\ln\left(\frac{\tilde O(1)\cdot p}{\epsilon}\right)$, as well as $\epsilon=n^{-\frac{1}{3}}$, in \eqref{to show bound prob} and $\lambda = \tilde{O}(1)\cdot\frac{p^{1/3}\sqrt{\ln(np)}}{n^{1/3}}$ in \eqref{test and obtain new result}. After some algebraic simplification, we  obtain that
\begin{align}
&\mathbb F({\mathbf X}^{RSAA})-\mathbb F(\mathbf X^*)\nonumber
\\\leq&\, -\frac{a\lambda^2}{2}{\bf rk}(\mathbf X^{RSAA}) +\tilde O(1)\cdot \frac{sp^{2/3}\ln (pn)}{n^{2/3}}+\tilde O(1)\cdot \sqrt{\frac{\tilde p}{n}\ln\left(pn\right)}+\frac{p^{1/3}}{n^{1/3}}+\Gamma\label{to use later for explanation}
\\\leq&\, \tilde O(1)\cdot \frac{\tilde p\cdot  p\ln (pn)}{n}+\tilde O(1)\cdot \sqrt{\frac{\tilde p\cdot p}{n}\ln\left(pn\right)}+\frac{1}{n^{1/3}}+\Gamma\label{useful later on}
\end{align}
with probability at least $1-\exp\left(-\tilde O(1)\cdot\tilde p\cdot p\cdot \ln\left(np\right)\right)-\exp(-\tilde O(1)\cdot n)$. Recalling that $\tilde p$ is an upper bound on the rank of $\mathbf X^{RSAA}$, the above result in \eqref{useful later on} is now close to the desired ``almost linear'' sample complexity results if $\tilde p$ much smaller than $p$. As it turns out, it is indeed the case. As is demonstrated in Theorem \ref{first theorem}, we can show that ${\bf rk}(\mathbf X^{RSAA})\leq   \tilde p:=\tilde O(1)\cdot \left(s+\frac{n^{1/3}}{p^{1/3}}+\frac{n^{1/3}}{p^{1/3}}\cdot \Gamma\right)$, which is to be explained subsequently. 
\item[Step 4. {\it Upper bound on ${\bf rk}(\mathbf X^{RSAA})$.}] From Step 3, we observe that the desired result in Theorem \ref{first theorem} can be shown by proving that 
\begin{align}{\bf rk}(\mathbf X^{RSAA})\leq \tilde O(1)\cdot \left(s+\frac{n^{1/3}}{p^{1/3}}+\frac{n^{1/3}}{p^{1/3}}\cdot \Gamma\right).\label{relationship here needed}
\end{align} To that end, we may invoke a scheme motivated by the peeling technique discussed by \citep{Raskutti}. We will show in Proposition \ref{bound dimensions} that, for some integer $\tilde p_u:= \tilde O(1)\cdot \left(s+\frac{n^{1/3}}{p^{1/3}}+\frac{n^{1/3}}{p^{1/3}}\cdot \Gamma\right)$,  it holds that, for all  $\tilde p\geq \tilde p_u$, the inequality in \eqref{to use later for explanation} cannot be satisfied given $\{{\bf rk}(\mathbf X^{RSAA})\geq \tilde p\}$; this is because the first (negative) term therein would have too large a magnitude and render the whole composite  on the right-hand-side of \eqref{to use later for explanation} a negative quantity, which implies $\mathbb F({\mathbf X}^{RSAA})-\mathbb F(\mathbf X^*)<0$ and contradicts with the fact that $\mathbf X^*$ minimizes $\mathbb F$ by definition. Since $\{\text{\eqref{to use later for explanation} holds}\}\cap \{{\bf rk}(\mathbf X^{RSAA})\geq \tilde p\}\supseteq\{{\bf rk}(\mathbf X)=\tilde p\}\cap\{\text{The complement to \eqref{to show bound prob}  holds with given $\tilde p$}\}$, it then implies that, for all $\tilde p\geq \tilde p_u$,
\begin{align}
0=\mathbb P\left[\{{\bf rk}(\mathbf X^{RSAA})=\tilde p\}\cap \{\text{The complement to \eqref{to show bound prob}  holds with given $\tilde p$}\}\right].\nonumber
\end{align} 
As an immediate result,   $\mathbb P[{\bf rk}(\mathbf X)=\tilde p]\leq \mathbb P[\{\text{\eqref{to show bound prob} holds with given $\tilde p$}\}]$ for all $\tilde p:\,\tilde p\geq \tilde p_u$. Therefore, invoking union bound and De Morgan's law, $\mathbb P[{\bf rk}(\mathbf X)\leq \tilde p_{u}-1]\geq 1-\sum_{\tilde p= \tilde p_u}^p \mathbb P[{\bf rk}(\mathbf X)=\tilde p]\geq 1-\sum_{\tilde p= \tilde p_u}^p \mathbb P[\{\text{\eqref{to show bound prob} holds with given $\tilde p$}\}]$.   By our choice of parameters for $t$ and $\epsilon$ as in Step 2, the {\bf Observation ($\star$)} (which is defined in Step 2) leads to a simplification of the probability bound by noting $\sum_{\tilde p= \tilde p_u}^p \mathbb P[\{\text{\eqref{to show bound prob} holds with given $\tilde p$}\}]$ involves the sum of a geometric sequence plus a term vanishing exponentially in $n$.  Combining the results from Step 4 with Step 3, we can then show   Theorem \ref{first theorem} after some algebraic simplification.


\item[Step 5. {\it To show Corollaries \ref{first corollary} and \ref{second corollary}.}] Both corollaries can be shown by noticing that $ {\mathbf X}^{\ell_1}_{\lambda}$ yields a suboptimality gap of no more than $\tilde{O}(1)\cdot \lambda\cdot s\cdot R$ when we choose $\lambda=\tilde{O}(1)\cdot\frac{p^{1/3}\sqrt{\ln(np)}}{n^{1/3}s^{-\rho/2}}$ in \eqref{nuclear problem} (which share the same $\lambda$ value as  in \eqref{original problem 2}). Specifically, Corollary \ref{first corollary} is shown with $\rho=0$ and Corollary \ref{second corollary} is shown with $\rho= 2/3$.
\end{description}
\section{Conclusions}\label{conclusions}
This paper proposes a regularized SAA (RSAA), which is incorporates a low-rankness-exploiting regularization into the traditional SAA framework, to solve  high-dimensional SP problems of minimizing an expected function over a $p$-by-$p$ matrix argument. We prove that  certain stationary points   ensure an almost linear sample complexity:  the RSAA only requires a sample size  almost linear in $p$ to achieve sound optimization quality, while, in contrast, the required sample size for the traditional SAA is at least quadratic in  $p$. The reduced sample complexity can be obtained at certain stationary points without incurring a significant computational effort, especially when the cost function $f(\,\cdot\,,z)$ is convex for almost every $z\in\mathcal W$. Our RSAA theory also implies that, under the low-rankness assumption,  high-dimensional matrix estimation   is generally possible beyond linear and generalized linear models even if $p$, the size of the matrix  to be estimated, is large and the RSC is absent. Future research will focus on generalizing our paradigm to problems with general linear and nonlinear constraints. Furthermore, we will investigate the (non-)tightness of our bound on sample complexity.

%
\begin{appendix}
\section{Technical proofs}\label{proof section}

\subsection{Proof of Theorem \ref{first theorem}}\label{proof of the desired result}
The proof follows the argument of  Proposition 1 in \cite{Liu2019} and makes important generalizations  from handling sparsity to low-rankness. Furthermore, much more flexible choices of penalty parameters $\lambda$ is enabled. We follow the same set of notations in Proposition \ref{core proposition 1} in defining $\tilde p_u$, $\epsilon$, and
$
\Delta_1(\epsilon):=\ln\left(\frac{18  p R\cdot(K_C+\mathcal C_\mu)}{\epsilon}\right)$.  Furthermore, we will let $\epsilon:={\frac{1}{n^{1/3}}}$ and   $\tilde \Delta:=\ln\left(18 \cdot R\cdot(K_C+\mathcal  C_\mu)\right)$. Then  $\Delta_1(\epsilon)=\ln\left(\frac{18\cdot  (K_C+\mathcal  C_\mu)\cdot p\cdot R}{\epsilon}\right)=\ln(n^{1/3}p)+\tilde \Delta>0$ and $\lambda=\sqrt{\frac{8\cdot s^{-\rho}\cdot K(2p+1)^{2/3}\cdot  \Delta_1(\epsilon)}{c\cdot a\cdot n^{2/3}}}=\sqrt{\frac{8\cdot s^{-\rho}\cdot K \cdot (2p+1)^{2/3}}{c\cdot a\cdot n^{2/3}}[\ln(n^{1/3} p)+\tilde \Delta]}$.  We will denote by $O(1)$'s  universal constants, which may be different in each of their occurence.

To show the desired results, it suffices to simplify the results in Proposition \ref{core proposition 1}.  
We will first derive an explicit form for $\tilde p_u$. To that end, we let ${P_X}:= { \tilde p_u}$ and $T_1:=2P_\lambda(a\lambda)-\frac{8K\cdot(2p+1) }{cn}\Delta_1(\epsilon)$. We then solve the following inequality, which is equivalent to \eqref{condition on value lambda} of Proposition \ref{core proposition 1}, for  a feasible $P_X$,
 \begin{align}
\frac{T_1}{2}\cdot P_X-\frac{2 K  }{\sqrt{n}}\sqrt{\frac{2P_X\cdot (2p+1)\Delta_1(\epsilon)}{c}}>\Gamma+2\epsilon+sP_\lambda(a\lambda),\label{initial inequality}
\end{align}
for   the same $c\in(0,\, 0.5]$ in \eqref{sub-exponential sequence sum}.
Solving the above inequality in terms of $P_X$, we have
$
\sqrt{P_X}> \frac{2 K}{T_1\sqrt{n}}\sqrt{\frac{2(2p+1)\cdot\Delta_1(\epsilon)}{c}}+
\frac{\sqrt{\frac{2(2 K )^2\cdot(2p+1)\cdot\Delta_1(\epsilon)}{cn}+2T_1[\Gamma+2\epsilon+sP_\lambda(a\lambda)]}}{T_1}.$
To find a feasible $P_X$, we may as well let
$
{P_X}> \frac{32K^2\cdot (2p+1)\cdot\Delta_1(\epsilon)}{cT_1^2\cdot n}+8T_1^{-1}[\Gamma+2\epsilon+sP_\lambda(a\lambda)].$
For  $\lambda=\sqrt{\frac{8K\cdot s^{-\rho} \cdot \Delta_1(\epsilon)\cdot (2p+1)^{2/3}}{c\cdot a\cdot n^{2/3}}}=\sqrt{\frac{8K\cdot s^{-\rho}\cdot (2p+1)^{2/3}}{c\cdot a\cdot n^{2/3}}[\ln(n^{1/3}p)+\tilde \Delta]}$ with $\tilde \Delta:=\ln\left( 18 \cdot R\cdot(K_C+\mathcal  C_\mu)\right)$),  we have $P_\lambda(a\lambda)=\frac{a\lambda^2}{2}=\frac{4K \cdot s^{-\rho}\cdot(2p+1)^{2/3}}{c\cdot n^{2/3}}\cdot \Delta_1(\epsilon)$. Furthermore, $2P_\lambda(a\lambda)=\frac{8K \cdot s^{-\rho}\cdot (2p+1)^{2/3}\cdot \Delta_1(\epsilon)}{c\cdot n^{2/3}}> \frac{4 \cdot s^{-\rho} K\cdot  \Delta_1(\epsilon)\cdot (2p+1)^{2/3}}{c\cdot n^{2/3}}+\frac{8K \cdot (2p+1)}{nc}\Delta_1(\epsilon)$ as per our assumption (i.e., \eqref{sample initial requirement 2 3} implies that $n^{1/3}> 2 s^{\rho}$). Therefore, $T_1=2P_\lambda(a\lambda)- \frac{8K\cdot(2p+1)}{nc}\Delta_1(\epsilon)>\frac{4 K \cdot s^{-\rho}\cdot \Delta_1(\epsilon)\cdot (2p+1)^{2/3}}{c\cdot n^{2/3}}$. Hence, if we recall $\epsilon=n^{-1/3}$, to satisfy \eqref{initial inequality}, it suffices to let $P_X$ be any integer that satisfies 
$
P_X
\geq \frac{2cn^{1/3} s^{2\rho}}{\Delta_1(n^{-\frac{1}{3}})\cdot (2p+1)^{2/3}\cdot }+\frac{2cn^{2/3}s^{\rho}}{K \Delta_1(n^{-\frac{1}{3}})\cdot (2p+1)^{2/3}\cdot }\cdot\left[\Gamma+\frac{2}{n^{1/3}}+sP_\lambda(a\lambda)\right],$
which is satisfied by letting $P_X\geq \tilde p_u$ with
\begin{align}
\tilde p_u
:=& \left\lceil\frac{2cn^{1/3}s^{2\rho}}{\Delta_1(n^{-\frac{1}{3}})\cdot (2p+1)^{1/3}}+\frac{2cn^{2/3}s^{\rho}}{K\cdot \Delta_1(n^{-\frac{1}{3}})\cdot (2p+1)^{2/3}}\cdot\left(\Gamma+\frac{2}{n^{1/3}}\right)+8s\right\rceil.\label{assignment of pu new}
\end{align}
In the meantime, verifiably, $\tilde p_u>s$.
 Since the above is a sufficient to ensure \eqref{initial inequality}, we know that  \eqref{condition on value lambda} in Proposition \ref{core proposition 1} holds for any $\tilde p:\, \tilde p_u\leq \tilde p\leq p$.
Due to Proposition \ref{core proposition 1},   with probability at least
$
P^*:= \, 1-6\exp\left(-\tilde p_u\cdot (2p+1)\cdot \Delta_1(n^{-\frac{1}{3}})\right) -2 (p+1) \exp(-\tilde cn) \nonumber
\geq 1-6\exp(-2c\cdot(2p+1)^{2/3} \cdot n^{1/3})-2 (p+1) \exp(-\tilde cn)$, it holds that 
\begin{multline}
\mathbb F(\mathbf X^{RSAA})-\mathbb  F({\mathbf X}^*)
\leq s\cdot P_\lambda(a\lambda)+ {\frac{2K}{\sqrt{n}}}\sqrt{\frac{2 \tilde p_u(2p+1)}{c}\Delta_1(n^{-\frac{1}{3}})} 
\\+\frac{4K }{n}\frac{ \tilde p_u(2p+1)}{c}\Delta_1(n^{-\frac{1}{3}})+2\epsilon+\Gamma, \label{first results obtained}
\end{multline}
in which $\tilde p_u$ is as per \eqref{assignment of pu new}.

The following simplifies the formula while seeking to preserve the rates in  $n$ and $p$. Firstly, we have \begin{align}&\sqrt{\frac{2 \tilde p_u\cdot (2p+1)}{cn}\Delta_1(n^{-\frac{1}{3}})}  
\\ \leq&\,\sqrt{\frac{4\cdot (2p+1)s^{2\rho}}{cn\cdot (2p+1)^{1/3}}\Delta_1(n^{-\frac{1}{3}}) \cdot\frac{cn^{1/3}}{\Delta_1(n^{-\frac{1}{3}})}+\frac{4cn^{2/3}(2p+1)s^{\rho}}{K(2p+1)^{2/3} \Delta_1(n^{-\frac{1}{3}})}\left(\Gamma+\frac{2}{n^{1/3}}\right)\cdot \frac{\Delta_1(n^{-\frac{1}{3}})}{cn}}  \nonumber
\\&+\sqrt{\frac{2}{cn}\Delta_1(n^{-\frac{1}{3}})\cdot \left( 8s+1\right)\cdot(2p+1)} \nonumber
\\
\leq&\,\sqrt{\frac{4(2p+1)^{2/3}s^{2\rho}}{n^{2/3}}+\frac{4s^{\rho}\cdot (\Gamma+\frac{2}{n^{1/3}})\cdot (2p+1)^{1/3}}{K n^{1/3}}}+  \sqrt{\frac{2}{nc}\Delta_1(n^{-\frac{1}{3}})\cdot \left( 8s+1\right)\cdot(2p+1)},\label{first bound here}
\end{align}
which is due to $\sqrt{x+y}\leq \sqrt{x}+\sqrt{y}$ for any $x,\, y\geq 0$ and the relations that $0<a<\mathcal U_L^{-1}\leq 1$, $0<c\leq 0.5$, $ K\geq 1$, and $\Delta_1(n^{-\frac{1}{3}})\geq\ln 36$.

Similar to the above, we obtain 
\begin{multline}
  {\frac{3 \tilde p_u\cdot (2p+1)}{cn}\Delta_1(n^{-\frac{1}{3}})}   
 \\\leq  {\frac{4\cdot (2p+1)^{2/3}s^{2\rho}}{n^{2/3}}}+  {\frac{2}{nc}\Delta_1(n^{-\frac{1}{3}})\left(8s+1\right)\cdot(2p+1)}
+ {\frac{4\cdot s^{\rho}\cdot(\Gamma+\frac{2}{n^{1/3}})}{K\cdot n^{1/3}}}\cdot (2p+1)^{1/3}. \label{to use}
\end{multline}
Since \eqref{sample initial requirement 2 3} and $\Delta_1(n^{-\frac{1}{3}})=\ln(np)+\tilde \Delta$, we have
$
\frac{4(2p+1)^{2/3}s^{2\rho}}{n^{2/3}}+\frac{4(\Gamma+\frac{2}{n^{1/3}})\cdot (2p+1)^{1/3}s^{\rho}}{K n^{1/3}}\leq O(1)$ and $ {\frac{2}{nc}\Delta_1(n^{-\frac{1}{3}})\left[ 8s+1\right]\cdot (2p+1)}\leq O(1)$.
Therefore,  it holds that
$
{\frac{2 \tilde p_u}{cn}\Delta_1(n^{-\frac{1}{3}})(2p+1)} \leq O(1)\cdot\sqrt{\frac{(2p+1)^{2/3}s^{2\rho}}{n^{2/3}}+\frac{(\Gamma+\frac{2}{n^{1/3}})\cdot (2p+1)^{1/3}\cdot s^{\rho}}{K n^{1/3}}}
+O(1)\cdot\sqrt{{\frac{\Delta_1(n^{-\frac{1}{3}})}{nc}\cdot\left( 8s+1\right)\cdot (2p+1)}}$.
Combining the above with \eqref{first bound here} and \eqref{to use},   the inequality in \eqref{first results obtained}   can be simplified into
$
\mathbb F(\mathbf X^{RSAA})-\mathbb  F({\mathbf X^*}) \nonumber
\leq  O(1)s^{1-\rho}\cdot \frac{K \cdot \Delta_1(n^{-\frac{1}{3}})\cdot p^{2/3}}{c\cdot n^{2/3}}+ O(1)\cdot K\cdot\sqrt{\frac{p^{2/3}s^{2\rho}}{n^{2/3}}+\frac{(\Gamma+\frac{2}{n^{1/3}})\cdot p^{1/3}s^{\rho}}{K n^{1/3}}} +O(1)\cdot K \sqrt{\frac{s p}{nc}\Delta_1(n^{-\frac{1}{3}})}
+\frac{2}{n^{1/3}}+\Gamma$.
Together with  $\Delta_1(n^{-\frac{1}{3}})\geq \ln 2$, $K \geq 1$, and $0<c\leq 0.5$, the above becomes
\begin{multline}\mathbb F(\mathbf X^{RSAA})-\mathbb  F({\mathbf X^*}) 
\leq O(1)\cdot\left(\frac{s^{1-\rho}\cdot \Delta_1(n^{-1/3})  \cdot p^{2/3}}{n^{2/3}}+\frac{p^{1/3}\cdot s^{\rho}}{n^{1/3}}+\sqrt{\frac{s\cdot p\cdot \Delta_1(n^{-1/3})}{n}}\right)\cdot K 
\\+O(1)\cdot\sqrt{\frac{K \cdot s^{\rho}\cdot p^{1/3}\cdot \Gamma}{n^{1/3}}}+\Gamma,\label{part 1 result here in proof}
\end{multline}
 which then shows  Theorem \ref{first theorem}  since $\Delta_1(n^{-\frac{1}{3}}):=\ln\left( 18n^{1/3} (K_C+\mathcal  C_\mu) \cdot p\cdot R\right)$. \hfill \ensuremath{\blacksquare}

 \subsection{Proof of Corollary \ref{first corollary}}\label{proof of the desired result corollary 1}
 
 Lemma \ref{initial gap} implies that $\mathcal F_{n,\lambda}(\mathbf X^{RSAA},\,{\mathbf Z}_1^n)\leq \mathcal F_{n,\lambda}({\mathbf X}^*,\,{\mathbf Z}_1^n)+\lambda\Vert \mathbf X^*\Vert_*$ almost surely.   Below we invoke the results from Theorem \ref{first theorem} with $\Gamma=\lambda\Vert \mathbf X^*\Vert_*$ and  assumption that $\rho=0$ and $\lambda=\lambda(0)$. Note that it is assumed that \begin{align}
n > C_2\cdot p\cdot \mathcal U_L\cdot [\ln(np)+\tilde \Delta]\cdot s^{3/2}R^{3/2}> O(1)\cdot p\cdot a^{-1}\cdot [\ln(np)+\tilde \Delta]\cdot s^{3/2}R^{3/2},\label{how to use determine}
\end{align}
 and  $\frac{\Gamma}{K }\leq \frac{\lambda\Vert \mathbf X^*\Vert_*}{K}\leq \frac{ \Vert \mathbf X^*\Vert_*\cdot \sqrt{\frac{8K \cdot (2p+1)^{2/3}}{c\cdot a\cdot n^{2/3}}[\ln(n^{1/3}p)+\tilde \Delta]}}{K}$ (as well as $K \geq 1$). In view of \eqref{how to use determine}, it then holds under Assumption 1 that $\frac{\Gamma}{K} \leq R s\cdot \sqrt{\frac{8(2p+1)^{2/3}}{cK\cdot a\cdot n^{2/3}}[\ln(n^{1/3}p)+\tilde \Delta]}\leq O(1)\cdot \sqrt{\frac{Rs}{a^{1/3} }[\ln(n^{1/3}p)+\tilde \Delta]^{1/3}}$. Therefore,  $\left(\frac{\Gamma}{K}\right)^{3}\leq \left( O(1)\cdot  \sqrt{\frac{Rs}{a^{1/3} }[\ln(n^{1/3}p)+\tilde \Delta]^{1/3}}\right)^{3}
\leq O(1)\cdot R^{3/2}s^{3/2}\sqrt{a^{-1}\cdot [\ln(n^{1/3}p)+\tilde \Delta]},$
 for some universal constants $O(1)$. Furthermore, since $a<\mathcal U_L^{-1}\leq1$, it holds that, if $n$ satisfies \eqref{sample initial requirement 2}  for some universal constant $C_2$, then
$
n >  O(1)\cdot p\cdot a^{-1 }\cdot [\ln(n^{1/3}p)+\tilde \Delta]\cdot s^{3/2}R^{3/2}
\geq O(1)\cdot p\cdot R^{3/2}s^{3/2}\sqrt{a^{-1}\cdot [\ln(n^{1/3}p)+\tilde \Delta]}+O(1)\cdot p+ C_1\cdot s\cdot p\cdot \left(\ln(n^{1/3}p)+\tilde \Delta\right)\
\geq C_1\cdot \left[\left(\frac{\Gamma}{K}\right)^{3}p+p+s\cdot p\cdot \left(\ln(n^{1/3}p)+\tilde \Delta\right)\right].$
Therefore,  Theorem \ref{first theorem} is met and thus \eqref{bound global 3} in Theorem \ref{first theorem} implies that 
\begin{multline}
\mathbb F(\mathbf X^{RSAA})-\mathbb  F({\mathbf X^*}) 
\leq O(1)\cdot K\cdot\left(\frac{sp^{2/3}\Delta_1(n^{-1/3})}{n^{2/3}}+\sqrt{\frac{sp\Delta_1(n^{-\frac{1}{3}})}{n}}+\frac{p^{1/3}}{n^{1/3}}\right)
\\+O(1)\cdot\sqrt{\frac{Kp^{1/3}(\lambda\Vert \mathbf X^*\Vert_*)}{n^{1/3}}}+\lambda\Vert \mathbf X^*\Vert_*,\nonumber
\end{multline}
  with probability at least 
$
1-2  (2p+1) \exp(-\tilde cn)-6\exp\left(-2cn^{1/3}\cdot (2p+1)^{2/3} \right)
$. Note that $a<1$, $K \geq 1$, $p\geq 1$, $\left[\ln(n^{1/3}p)+\tilde \Delta\right]\geq 1$ and $\sqrt{\frac{sp\Delta_1(n^{-\frac{1}{3}})}{n}}\leq \frac{s(2p+1)^{1/3}\cdot\sqrt{\Delta_1(n^{-\frac{1}{3}})}}{n^{1/3}}$ (due to \eqref{sample initial requirement 2} again). Hence,
$\mathbb   F(\mathbf X^{RSAA})-\mathbb  F({\mathbf X^*}) 
\leq O(1)\cdot K\cdot\left[\frac{sp^{2/3}\cdot \left(\ln(np)+\tilde \Delta\right)}{n^{2/3}}+\frac{p^{1/3}}{n^{1/3}}\right]
+O(1)\cdot \frac{sRK\cdot(2p+1)^{1/3}}{\min\left\{a^{1/2}n^{1/3},\,a^{1/4}n^{1/3}\right\}} \left[\ln(n^{1/3}p)+\tilde \Delta\right]^{1/2}$, which shows Part (ii) by further noticing that $a=\frac{1}{2\mathcal U_L}$ and $\mathcal U_L\geq 1$.
 \hfill \ensuremath{\blacksquare}
 \subsection{Proof of Corollary \ref{second corollary}}\label{proof of the desired result corollary 2}
The proof follows almost the same argument as in Section \ref{proof of the desired result corollary 1} for proving Corollary \ref{first corollary}, except that the choice of user-specific parameters are different. Again, Lemma \ref{initial gap} implies that $\mathcal F_{n,\lambda}(\mathbf X^{RSAA},\,{\mathbf Z}_1^n)\leq \mathcal F_{n,\lambda}({\mathbf X}^*,\,{\mathbf Z}_1^n)+\lambda\Vert \mathbf X^*\Vert_*$ almost surely.   As the same in Part (ii), below we invoke the results from Theorem \ref{first theorem}   with $\Gamma=\lambda\Vert \mathbf X^*\Vert_*$ and  assumption that $\rho=2/3$ and $\lambda=\lambda(\frac{2}{3})$. Note that it is assumed that \begin{align}
n > C_3\cdot p\cdot \mathcal U_L\cdot [\ln(np)+\tilde \Delta]\cdot s^{2}R^{3/2}> O(1)\cdot p\cdot a^{-1}\cdot [\ln(np)+\tilde \Delta]\cdot s^{2}R^{3/2},\label{test to use here how}
\end{align}
 and  $\frac{\Gamma}{K }\leq \frac{\lambda\Vert \mathbf X^*\Vert_*}{K}\leq \frac{ \Vert \mathbf X^*\Vert_*\cdot \sqrt{\frac{8K \cdot (2p+1)^{2/3}\cdot s^{-2/3}}{c\cdot a\cdot n^{2/3}}[\ln(n^{1/3}p)+\tilde \Delta]}}{K}$ (as well as $K \geq 1$). In view of \eqref{test to use here how}, it then holds under Assumption 1 that $\frac{\Gamma}{K} \leq R s\cdot \sqrt{\frac{8(2p+1)^{2/3} s^{-2/3}}{cK\cdot a\cdot n^{2/3}}[\ln(n^{1/3}p)+\tilde \Delta]}\leq O(1)\cdot \sqrt{\frac{R}{a^{1/3} }[\ln(n^{1/3}p)+\tilde \Delta]^{1/3}}$. Therefore,  $\left(\frac{\Gamma}{K}\right)^{3}\leq \left( O(1)\cdot  \sqrt{\frac{R}{a^{1/3} }[\ln(n^{1/3}p)+\tilde \Delta]^{1/3}}\right)^{3}
\leq O(1)\cdot R^{3/2} \sqrt{a^{-1}\cdot [\ln(n^{1/3}p)+\tilde \Delta]},$
 for some universal constants $O(1)$. Furthermore, since $a<\mathcal U_L^{-1}\leq1$, it holds that, if $n$ satisfies \eqref{sample initial requirement 2 new 234}, then
$
n >  O(1)\cdot p\cdot a^{-1 }\cdot [\ln(n^{1/3}p)+\tilde \Delta]\cdot s^{2}R^{3/2}
\geq O(1)\cdot p\cdot R^{3/2}s^{2}\sqrt{a^{-1}\cdot [\ln(n^{1/3}p)+\tilde \Delta]}+O(1)\cdot s^2\cdot p+ C_1s\cdot p \cdot\left(\ln(n^{1/3}p)+\tilde \Delta\right)\
\geq C_1\cdot \left[ s^2\left(\frac{\Gamma}{K}\right)^{3}p+s^2\cdot p+s\cdot p\cdot \left(\ln(n^{1/3}p)+\tilde \Delta\right)\right].$
Therefore, \eqref{sample initial requirement 2 3} in Theorem \ref{first theorem} is met and thus \eqref{bound global 3} in Theorem \ref{first theorem} implies that 
\begin{multline}
\mathbb F(\mathbf X^{RSAA})-\mathbb  F({\mathbf X^*}) 
\leq O(1)\cdot K\cdot\left(\frac{s^{1/3}p^{2/3}\Delta_1(n^{-1/3})}{n^{2/3}}+\sqrt{\frac{sp\Delta_1(n^{-\frac{1}{3}})}{n}}+\frac{p^{1/3}\cdot s^{2/3}}{n^{1/3}}\right)
\\+O(1)\cdot\sqrt{\frac{Kp^{1/3}\cdot s^{2/3}\cdot (\lambda\Vert \mathbf X^*\Vert_*)}{n^{1/3}}}+\lambda\Vert \mathbf X^*\Vert_*,\nonumber
\end{multline}
  with probability at least 
$
1-2  (2p+1) \exp(-\tilde cn)-6\exp\left(-2cn^{1/3}\cdot (2p+1)^{2/3} \right)
$. Note that $a<1$, $K \geq 1$, $p\geq s\geq 1$, $\left[\ln(n^{1/3}p)+\tilde \Delta\right]\geq 1$ and $\sqrt{\frac{sp\Delta_1(n^{-\frac{1}{3}})}{n}}\leq \frac{(2p+1)^{1/3}\cdot\sqrt{\Delta_1(n^{-\frac{1}{3}})}}{n^{1/3}}$ (in view of \eqref{sample initial requirement 2 new 234} again). Hence,
$\mathbb   F(\mathbf X^{RSAA})-\mathbb  F({\mathbf X^*}) 
\leq O(1)\cdot K\cdot\left[\frac{s^{1/3}p^{2/3}\cdot \left(\ln(np)+\tilde \Delta\right)}{n^{2/3}}+\frac{s^{2/3}\cdot p^{1/3}}{n^{1/3}}\right]
+O(1)\cdot \frac{s^{2/3}RK\cdot(2p+1)^{1/3}}{\min\left\{a^{1/2}n^{1/3},\,a^{1/4}n^{1/3}\right\}} \left[\ln(n^{1/3}p)+\tilde \Delta\right]^{1/2}$, which shows Part (iii) by further noticing that $a=\frac{1}{2\mathcal U_L}$.
 \hfill \ensuremath{\blacksquare}

  \subsection{Auxiliary results}
  
  \begin{proposition}\label{Proposition 1}

Suppose that  $a<{U_L}^{-1}$. Assume that the {\it S$^3$ONC}$(\mathbf Z_1^n)$  is satisfied almost surely at $\mathbf X^{RSAA}\in\mathcal S_p  $. Then,
$$\mathbb P[\{\text{$\vert\sigma_j(\mathbf X^{RSAA})\vert\notin(0,\,a\lambda)$ for all $j$}\}]=1.$$
\end{proposition}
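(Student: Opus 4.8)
The plan is to argue deterministically on the probability-one event that $\mathbf{X}^{RSAA}$ satisfies the S$^3$ONC$(\mathbf{Z}_1^n)$ and then lift the conclusion to a statement that holds with probability one. The entire argument rests on computing the second derivative of the MCP on the open interval $(0,\,a\lambda)$ and confronting it with the curvature bound supplied by part (b) of Definition \ref{SONC definition}.

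First I would record the explicit form of $P_\lambda$ on $(0,\,a\lambda)$. Because $P_\lambda(x)=\int_0^x\frac{[a\lambda-t]_+}{a}\,dt$, for $x\in(0,\,a\lambda)$ the integrand equals $\frac{a\lambda-t}{a}$, so that $P_\lambda(x)=\lambda x-\frac{x^2}{2a}$; hence $P'_\lambda(x)=\lambda-\frac{x}{a}$ and, crucially, $P''_\lambda(x)=-\frac{1}{a}$ throughout this interval. This both confirms the twice-differentiability noted just before Definition \ref{SONC definition} and pins the penalty curvature to the constant $-a^{-1}$ on the entire open interval.

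Next I would argue by contradiction. Suppose that, on the S$^3$ONC event, there is an index $j$ with $\sigma_j(\mathbf{X}^{RSAA})\in(0,\,a\lambda)$. Then the conditional inequality \eqref{second condition 1} of part (b) is in force, and substituting $P''_\lambda(\sigma_j(\mathbf{X}^{RSAA}))=-a^{-1}$ gives $\mathcal{U}_L-a^{-1}\geq 0$, i.e.\ $\mathcal{U}_L\geq a^{-1}$. This directly contradicts the standing hypothesis $a<\mathcal{U}_L^{-1}$, which is equivalent to $\mathcal{U}_L<a^{-1}$. Therefore no eigenvalue of $\mathbf{X}^{RSAA}$ can lie in $(0,\,a\lambda)$; and since $\mathbf{X}^{RSAA}\in\mathcal{S}_p$ is positive semidefinite, its singular values coincide with its nonnegative eigenvalues, so $|\sigma_j(\mathbf{X}^{RSAA})|=\sigma_j(\mathbf{X}^{RSAA})\notin(0,\,a\lambda)$ for every $j$.

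Finally I would convert this into the stated probability-one conclusion: by assumption the event on which the S$^3$ONC holds has probability one, and the deterministic argument just given shows that this event is contained in $\{\sigma_j(\mathbf{X}^{RSAA})\notin(0,\,a\lambda)\text{ for all }j\}$, so the latter event also has probability one. I anticipate essentially no analytic obstacle here; the only point requiring care is the exact value and sign of $P''_\lambda$ on the \emph{open} interval (rather than at the kink $x=a\lambda$ or at the origin), but part (b) invokes the second derivative precisely where $\sigma_j\in(0,\,a\lambda)$, where $P_\lambda$ is genuinely smooth, so the computation is unambiguous.
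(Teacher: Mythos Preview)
Your proposal is correct and follows essentially the same approach as the paper: compute $P''_\lambda(t)=-a^{-1}$ on $(0,\,a\lambda)$, note that condition \eqref{second condition 1} would then force $\mathcal U_L\geq a^{-1}$ in contradiction with the hypothesis $a<\mathcal U_L^{-1}$, and lift the deterministic contradiction to a probability-one statement via the almost-sure S$^3$ONC assumption. Your containment-of-events phrasing of the last step is equivalent to (and arguably cleaner than) the paper's De~Morgan/union-bound formulation.
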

\begin{proof}
 Since $\mathbf X^{RSAA}$  satisfies the {\it S$^3$ONC}$(\mathbf Z_1^n)$  almost surely, Eq.\,\eqref{second condition 1} implies that for any $j\in\{1,...,p\}$, if $\sigma_j(\mathbf X^{RSAA})\in(0,\,a\lambda)$, then
\begin{align}
0\leq&\,U_L+\left[\frac{\partial^2 P_\lambda(\vert\sigma_j(\mathbf X)\vert)}{[\partial\sigma_j(\mathbf X)]^2}\right]_{\mathbf X=\mathbf X^{RSAA}} 
=  U_L-\frac{1}{a}.\label{to contradict 0 alambda}
\end{align}
Further observe that  $\frac{\partial^2 P_\lambda(t)}{\partial t^2}=-a^{-1}$ for $t\in(0,\,a\lambda)$. Therefore, \eqref{to contradict 0 alambda} contradicts with the assumption that $U_L<\frac{1}{a}$.  This contradiction implies that 
\begin{align}
&\,\mathbb P[\{\mathbf X^{RSAA}\text{ satisfies the S$^3$ONC$(\mathbf Z_1^n)$}\}\cap\{\vert\sigma_j(\mathbf X^{RSAA})\vert\in(0,\,a\lambda)\}]=0\nonumber
\\\Longrightarrow &\,0\geq 1-\mathbb P[\{\mathbf X^{RSAA}\text{ does not satisfy the S$^3$ONC$(\mathbf Z_1^n)$}\}]-\mathbb P[\{\vert\sigma_j(\mathbf X^{RSAA})\vert\notin(0,\,a\lambda)\}].\nonumber
\end{align}
 Since  $\mathbb P[\{\mathbf X^{RSAA}\text{ satisfies the S$^3$ONC$(\mathbf Z_1^n)$}\}]=1$, it holds that $\mathbb P[\{\vert\sigma_j(\mathbf X^{RSAA})\vert\notin(0,\,a\lambda)\}]=1$ for all $j=1,...,n$, which immediately leads to the desired result.
 \end{proof}


\begin{proposition}\label{test proposition important} Suppose that Assumptions \ref{sub exponential condition} and \ref{Lipschitz condition} hold.
 Let $\epsilon\in(0,\, 1]$, $\tilde p:\,\tilde p> s$,  $\Delta_1(\epsilon):=\ln\left(\frac{18\cdot{(K_C+\mathcal  C_\mu)}\cdot p\cdot R}{\epsilon}\right)$, and  $\mathcal B_{\tilde p,R}:=\left\{\mathbf X\in\mathcal S_p:\, \sigma_{\max}(\mathbf X)\leq R,\, {\bf rk}(\mathbf X)\leq \tilde p \right\}.$  Then, for the same $c\in(0,\,0.5]$ as in \eqref{sub-exponential sequence sum} and for some  $\tilde c>0$,
$$
\max_{\mathbf X\in \mathcal B_{\tilde p,R}}\left\vert\frac{1}{n}\sum_{i=1}^n f(\mathbf X,Z_i)-\mathbb   F(\mathbf X)\right\vert
\leq  {\frac{K }{\sqrt{n}}}\sqrt{\frac{2 \tilde p(2p+1)}{c}\Delta_1(\epsilon)} +\frac{K}{n}\cdot\frac{2 \tilde p(2p+1)}{c}\Delta_1(\epsilon)+\epsilon$$ 
with probability at least $1-2\exp\left(- \tilde p(2p+1)\Delta_1(\epsilon)\right)-2\exp(-\tilde cn)$.
\end{proposition}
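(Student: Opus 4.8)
The plan is to run the standard covering-number ($\epsilon$-net) argument, where the only nonstandard ingredient is the covering number of the low-rank spectral-norm ball, supplied by Lemma~\ref{low rank epsilon net}. First I would fix a resolution $\delta>0$ and invoke Lemma~\ref{low rank epsilon net} to produce a $\delta$-net $\mathcal N\subseteq\mathcal B_{\tilde p,R}$ (in Frobenius norm) with
$$
|\mathcal N|\leq\left(\frac{9\sqrt{\tilde p}\,R}{\delta}\right)^{(2p+1)\tilde p},
$$
using that every $\mathbf X\in\mathcal B_{\tilde p,R}$ satisfies $\Vert\mathbf X\Vert_F\leq\sqrt{\tilde p}\,R$ and ${\bf rk}(\mathbf X)\leq\tilde p$, so that the exponent is $(2p+1)\tilde p$.

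For each fixed net point $\mathbf X\in\mathcal N$, Assumption~\ref{sub exponential condition} makes $f(\mathbf X,Z_i)-\mathbb E[f(\mathbf X,Z_i)]$ sub-exponential with norm at most $K$, so applying \eqref{sub-exponential sequence sum} with $\mathbf a=(1/n,\dots,1/n)$ (hence $\Vert\mathbf a\Vert=1/\sqrt n$ and $\Vert\mathbf a\Vert_\infty=1/n$) gives
$$
\mathbb P\left(\left\vert\frac1n\sum_{i=1}^n\{f(\mathbf X,Z_i)-\mathbb F(\mathbf X)\}\right\vert>\frac{K\sqrt t}{\sqrt n}+\frac{Kt}{n}\right)\leq2\exp(-ct).
$$
Taking $t=\frac{2\tilde p(2p+1)}{c}\Delta_1(\epsilon)$ makes the single-point failure probability $2\exp(-2\tilde p(2p+1)\Delta_1(\epsilon))$, and a union bound over $\mathcal N$ bounds the net failure probability by $|\mathcal N|\cdot2\exp(-2\tilde p(2p+1)\Delta_1(\epsilon))$. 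I would then choose $\delta$ so that $\ln|\mathcal N|\leq\tilde p(2p+1)\Delta_1(\epsilon)$; since $\Delta_1(\epsilon)=\ln(18(K_C+\mathcal C_\mu)pR/\epsilon)$ and $\sqrt{\tilde p}\leq p$, the value $\delta=\frac{\epsilon}{2(K_C+\mathcal C_\mu)}$ works (the base becomes $18\sqrt{\tilde p}(K_C+\mathcal C_\mu)R/\epsilon\leq18p(K_C+\mathcal C_\mu)R/\epsilon$), leaving net failure probability at most $2\exp(-\tilde p(2p+1)\Delta_1(\epsilon))$ and reproducing the two deviation terms $\frac{K}{\sqrt n}\sqrt{\frac{2\tilde p(2p+1)}{c}\Delta_1(\epsilon)}$ and $\frac{K}{n}\cdot\frac{2\tilde p(2p+1)}{c}\Delta_1(\epsilon)$.

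To transfer the bound from $\mathcal N$ to all of $\mathcal B_{\tilde p,R}$, I would pick, for arbitrary $\mathbf X\in\mathcal B_{\tilde p,R}$, a net point $\mathbf X'$ with $\Vert\mathbf X-\mathbf X'\Vert_F\leq\delta$ and split the deviation by the triangle inequality into the net deviation at $\mathbf X'$ plus two approximation errors. Assumption~\ref{Lipschitz condition} bounds these by $\left(\mathcal C_\mu+\frac1n\sum_i\mathcal C(Z_i)\right)\Vert\mathbf X-\mathbf X'\Vert$; the empirical Lipschitz average $\frac1n\sum_i\mathcal C(Z_i)$ is itself controlled by a Bernstein bound, since $\mathcal C(Z)-\mathbb E[\mathcal C(Z)]$ is sub-exponential with norm $\leq K_C$ and $\mathbb E[\mathcal C(Z)]\leq\mathcal C_\mu$, yielding $\frac1n\sum_i\mathcal C(Z_i)\leq\mathcal C_\mu+K_C$ with probability at least $1-2\exp(-\tilde cn)$ after choosing a deviation level proportional to $n$ — this is exactly the source of the second failure term. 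On that event the total approximation error is at most $2(K_C+\mathcal C_\mu)\delta=\epsilon$, so adding it to the net bound and maximizing over $\mathbf X$ gives the claimed inequality, with the overall failure probability following from a final union bound over the two bad events.

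The main obstacle I anticipate is the bookkeeping of constants at the balancing step: $\delta$ must be small enough that the Lipschitz approximation error is absorbed into the single $\epsilon$ allotted on the right-hand side, yet large enough that $\ln|\mathcal N|$ does not exceed $\tilde p(2p+1)\Delta_1(\epsilon)$, and these two requirements must be reconciled simultaneously with the $\sqrt{\tilde p}\,R$ scaling of the Frobenius radius and the $p$ hidden inside $\Delta_1(\epsilon)$. The factor $18$ and the product $(K_C+\mathcal C_\mu)pR$ inside $\Delta_1(\epsilon)$ are precisely what provide the slack making both requirements compatible ($18=9\times2$, where $9$ is the covering constant and $2$ comes from $\delta$), and verifying this compatibility is the only genuinely delicate part of the argument.
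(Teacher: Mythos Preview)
Your proposal is correct and follows essentially the same route as the paper's proof: construct a $\delta$-net for $\mathcal B_{\tilde p,R}$ via Lemma~\ref{low rank epsilon net} with $\delta=\epsilon/(2(K_C+\mathcal C_\mu))$, apply the Bernstein-type bound \eqref{sub-exponential sequence sum} at each net point with $t=\frac{2\tilde p(2p+1)}{c}\Delta_1(\epsilon)$, union bound over the net using $\sqrt{\tilde p}\leq p$ to absorb the covering number into $\Delta_1(\epsilon)$, and transfer to all of $\mathcal B_{\tilde p,R}$ by controlling $\frac{1}{n}\sum_i\mathcal C(Z_i)$ with a second Bernstein bound (this is precisely the paper's Lemma~\ref{new lemma result}, combined with Lemma~\ref{expected cost result Matrix} for the population part). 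Your identification of the constant bookkeeping as the only delicate point is also accurate.
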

\begin{proof}
We will follow the ``$\epsilon$-net''  argument similar to \citet{Shapiro:book} to construct a net of discretization grids $\mathcal G(\epsilon):=\{\tilde{\mathbf X}^k\}\subseteq \mathcal B_{\tilde p,R} $ such that for any $\mathbf X\in \mathcal B_{\tilde p,R} $, there is  $\mathbf X^k\in \mathcal G(\epsilon)$ that satisfies $\Vert\mathbf X^k-\mathbf X\Vert\leq \frac{\epsilon}{2K_C+2\mathcal  C_\mu}$  for any fixed $\epsilon\in(0,\,1]$. 

Invoking Lemma \ref{low rank epsilon net}, for  an arbitrary  $\mathbf X\in\mathcal B_{\tilde p,R}$, to ensure  that there always exists $\tilde{\mathbf X}^k \in \mathcal G(\epsilon)$ that ensures $\left\Vert\mathbf X-\tilde{\mathbf X}^k\right\Vert\leq \frac{\epsilon}{ (2K_C+2\mathcal  C_\mu)}$,
it is sufficient to have the number of grids to be no more than
$\left(\frac{18R\sqrt{\tilde p}\cdot (K_C+\mathcal C_\mu)}{\epsilon}\right)^{(2p+1)\tilde p}$. 
Now, we may observe
\begin{align}
&\mathbb P\left[\max_{\mathbf X^k\in \mathcal G(\epsilon)}\left\vert\frac{1}{n}\sum_{i=1}^n f(\mathbf X^k,Z_i)-\mathbb E\left[\frac{1}{n}\sum_{i=1}^n f(\mathbf X^k,Z_i)\right]\right\vert
\leq K \sqrt{\frac{ t}{n}}
+\frac{K t}{n} \right]  \nonumber
\\=&\mathbb P\left[\bigcap_{\mathbf X^k\in \mathcal G(\epsilon)}\left\{\left\vert\frac{1}{n}\sum_{i=1}^n f(\mathbf X^k,Z_i)-\mathbb E\left[\frac{1}{n}\sum_{i=1}^n f(\mathbf X^k,Z_i)\right]\right\vert
\leq K \sqrt{\frac{ t}{n}}
+\frac{K t}{n} \right\}\right]  \nonumber
\\\geq&1-\sum_{\mathbf X^k\in \mathcal G(\epsilon)}\mathbb P\left[\left\vert\frac{1}{n}\sum_{i=1}^n f(\mathbf X^k,Z_i)-\mathbb E\left[\frac{1}{n}\sum_{i=1}^n f(\mathbf X^k,Z_i)\right]\right\vert
> K \sqrt{\frac{ t}{n}}
+\frac{K t}{n}\right].  \label{to use here}
\end{align}
Further invoking   Eq.\,\eqref{sub-exponential sequence sum}, for  the same $c$ as in \eqref{sub-exponential sequence sum}, it holds that
\begin{align}
&\,\mathbb P\left[\max_{\mathbf X^k\in \mathcal G(\epsilon)}\left\vert\frac{1}{n}\sum_{i=1}^n f(\mathbf X^k,Z_i)-\mathbb E\left[\frac{1}{n}\sum_{i=1}^n f(\mathbf X^k,Z_i)\right]\right\vert
\leq K \sqrt{\frac{ t}{n}}
+\frac{K t}{n} \right] \nonumber
\\\geq &\,1-\vert \mathcal G(\epsilon)\vert\cdot2\exp(-ct)  \geq \,1-2\left(\frac{18R\sqrt{\tilde p}\cdot (K_C+\mathcal C_\mu)}{\epsilon}\right)^{(2p+1)\tilde p} \cdot\exp(-ct).\nonumber
\end{align}
Combined with Lemma  \ref{new lemma result} and Lemma \ref{expected cost result Matrix},
\begin{multline}\max_{\substack{{\mathbf X}\in\mathcal B_{\tilde p, R},\,\mathbf X^k\in \mathcal G(\epsilon)}}\left\{\left\vert \frac{1}{n}\sum_{i=1}^n f(\mathbf X,Z_i) -\frac{1}{n}\sum_{i=1}^n f(\mathbf X^k,Z_i) \right\vert \right.
\\\left. +\left\vert \mathbb E\left[\frac{1}{n}\sum_{i=1}^n f(\mathbf X,Z_i)\right] -\mathbb E\left[\frac{1}{n}\sum_{i=1}^n f(\mathbf X^k,Z_i) \right]\right\vert\right\}
\\\leq 2(K_C+\mathcal C_{\mu})\cdot \frac{\epsilon}{2K_C+2\mathcal  C_\mu}=\epsilon,
\end{multline}
 with probability at least $1-2\exp(-\tilde c\cdot n)$ for some problem independent $\tilde c>0$ and any fixed $\tau>0$. Observe that for any $\mathbf X\in \mathcal B_{\tilde p,R}$ and $\mathbf X^k\in \mathcal G(\epsilon)$, it holds that 
$\left\vert\mathcal F_n(\mathbf X,\mathbf Z_1^n)-\mathbb E\left[\mathcal F_n(\mathbf X,\mathbf Z_1^n)\right]\right\vert   
\leq \left\vert\mathcal F_n(\mathbf X^k,\mathbf Z_1^n)-\mathbb E\left[\mathcal F_n(\mathbf X^k,\mathbf Z_1^n)\right]\right\vert  
+ \left\vert \mathcal F_n(\mathbf X,\mathbf Z_1^n) -\mathcal F_n(\mathbf X^k,\mathbf Z_1^n) \right\vert  +\left\vert \mathbb E\left[\mathcal F_n(\mathbf X,\mathbf Z_1^n)\right] -\mathbb E\left[\mathcal F_n(\mathbf X^k,\mathbf Z_1^n) \right]\right\vert.$
Therefore,  with probability at least $1-2\exp(-\tilde c\cdot n)$ for some positive constant $\tilde c>0$,
\begin{multline}
\max_{\substack{{\mathbf X}\in\mathcal B_{\tilde p, R},\,\mathbf X^k\in \mathcal G(\epsilon)}} \left\{\left\vert\mathcal F_n(\mathbf X,\mathbf Z_1^n)-\mathbb E\left[\mathcal F_n(\mathbf X,\mathbf Z_1^n)\right]\right\vert-\left\vert\mathcal F_n(\mathbf X^k,\mathbf Z_1^n)-\mathbb E\left[\mathcal F_n(\mathbf X^k,\mathbf Z_1^n)\right]\right\vert\right\} \leq  \epsilon.\label{to minimize right}
\end{multline}
Further invoking \eqref{to use here}, we now obtain that
 \begin{align}
\max_{\substack{{\mathbf X}\in\mathcal B_{\tilde p, R},\,\mathbf X^k\in \mathcal G(\epsilon)}}\left\vert\mathcal F_n(\mathbf X,\mathbf Z_1^n)-\mathbb F(\mathbf X)\right\vert   
\leq \epsilon+ K \sqrt{\frac{ t}{n}}
+\frac{K t}{n}, \nonumber
\end{align}
with probability at least $1-2\left(\frac{18R\sqrt{\tilde p}\cdot (K_C+\mathcal C_\mu)}{\epsilon}\right)^{(2p+1)\tilde p} \cdot\exp(-ct)-2\exp(-\tilde c\cdot n)$.  
Finally, we may let $t:=\frac{2\tilde p}{c}\cdot (2p+1)\cdot \Delta_1(\epsilon)$, where $\Delta_1(\epsilon):=\ln\left(\frac{18\cdot  (K_C+\mathcal  C_\mu)\cdot p\cdot R}{\epsilon}\right)$, and obtain the desired result.
   \end{proof} 
\begin{proposition}\label{bound dimensions}
 Suppose that Assumptions \ref{approximate low-rankness} through \ref{sub exponential condition}  hold, the solution $\mathbf X^{RSAA}\in\mathcal S_p:\,\sigma_{\max}(\mathbf X^{RSAA})\leq R$ satisfies {\it S$^3$ONC}$(\mathbf Z_1^n)$ almost surely, 
 \begin{align}
& \mathcal F_{n,\lambda}(\mathbf X^{RSAA},\mathbf Z_1^n)\leq \mathcal F_{n,\lambda}({\mathbf X}^*,\mathbf Z_1^n)+\Gamma,~~w.p.1.\label{epsilon delta}
\end{align}
where  $\Gamma\geq 0$, $\epsilon\in(0,\, 1]$,  $
\Delta_1(\epsilon):=\ln\left(\frac{18\cdot{(K_C+\mathcal  C_\mu)}\cdot p\cdot R}{\epsilon}\right)$.
For a positive integer $\tilde p_u:\,\tilde p_u>s$, if 
\begin{align}
(\hat p-s)\cdot P_\lambda(a\lambda) > \frac{4K}{cn}\Delta_1(\epsilon)\cdot  \hat p\cdot (2p+1)
+{\frac{2K}{\sqrt{n}}}\sqrt{\frac{2\hat p\cdot (2p+1)}{c}\Delta_1(\epsilon)}+\Gamma+2\epsilon,\label{condition on value lambda 1}
\end{align}
for all $\hat p:\,  \tilde p_u\leq \hat p\leq p$,  then 
$
\mathbb P[{\bf rk}(\mathbf X^{RSAA})\leq \tilde p_u-1]\geq \,1-2 p \exp(-\tilde cn)-4\exp\left(- \tilde p_u(2p+1)\Delta_1(\epsilon)\right)\nonumber
$ for the same $c$ in \eqref{sub-exponential sequence sum} and some $\tilde c>0$.
\end{proposition}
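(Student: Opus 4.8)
The plan is to turn hypothesis \eqref{condition on value lambda 1} into a ``forbidden rank'' certificate: on the uniform concentration event of Proposition \ref{test proposition important} at resolution $\tilde p=\hat p$, the solution $\mathbf X^{RSAA}$ cannot have rank exactly $\hat p$, for otherwise its population suboptimality would be strictly negative, contradicting the minimality of $\mathbf X^*$ for $\mathbb F$. First I would record the deterministic consequence of the S$^3$ONC. By Proposition \ref{Proposition 1} every singular value of $\mathbf X^{RSAA}$ is either $0$ or at least $a\lambda$; since $P_\lambda$ is constant and equal to $P_\lambda(a\lambda)=\tfrac{a\lambda^2}{2}$ on $[a\lambda,\infty)$ and vanishes at $0$, the penalty collapses to $\sum_{j}P_\lambda(\sigma_j(\mathbf X^{RSAA}))=\mathbf{rk}(\mathbf X^{RSAA})\,P_\lambda(a\lambda)$. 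Because $\mathbf X^*$ has rank $s$ (Assumption \ref{approximate low-rankness}) and $0\le P_\lambda\le P_\lambda(a\lambda)$, its penalty is at most $s\,P_\lambda(a\lambda)$. Substituting both into the sublevel-set hypothesis \eqref{epsilon delta} and cancelling leaves the purely empirical inequality
\[
\mathcal F_n(\mathbf X^{RSAA},\mathbf Z_1^n)+(\mathbf{rk}(\mathbf X^{RSAA})-s)\,P_\lambda(a\lambda)\le \mathcal F_n(\mathbf X^*,\mathbf Z_1^n)+\Gamma .
\]

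Next I would fix an integer $\hat p$ with $\tilde p_u\le\hat p\le p$ and work on the event $\mathcal E_{\hat p}$ of Proposition \ref{test proposition important} (taken at $\tilde p=\hat p$), on which $|\mathcal F_n(\mathbf X,\mathbf Z_1^n)-\mathbb F(\mathbf X)|$ is uniformly bounded over $\mathbf X\in\mathcal B_{\hat p,R}$ by $\tfrac{K}{\sqrt n}\sqrt{\tfrac{2\hat p(2p+1)}{c}\Delta_1(\epsilon)}+\tfrac{K}{n}\tfrac{2\hat p(2p+1)}{c}\Delta_1(\epsilon)+\epsilon$. On $\mathcal E_{\hat p}\cap\{\mathbf{rk}(\mathbf X^{RSAA})=\hat p\}$ both $\mathbf X^{RSAA}$ and $\mathbf X^*$ lie in $\mathcal B_{\hat p,R}$ (the latter since $s\le\hat p$), so adding the gap bound for each to the empirical inequality above converts it into the population statement
\[
\mathbb F(\mathbf X^{RSAA})-\mathbb F(\mathbf X^*)\le -(\hat p-s)P_\lambda(a\lambda)+\tfrac{4K}{cn}\Delta_1(\epsilon)\,\hat p(2p+1)+\tfrac{2K}{\sqrt n}\sqrt{\tfrac{2\hat p(2p+1)}{c}\Delta_1(\epsilon)}+\Gamma+2\epsilon .
\]
Hypothesis \eqref{condition on value lambda 1} asserts precisely that this right-hand side is strictly negative, forcing $\mathbb F(\mathbf X^{RSAA})<\mathbb F(\mathbf X^*)$ and contradicting the fact that $\mathbf X^*$ minimizes $\mathbb F$ by definition. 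Hence the event $\mathcal E_{\hat p}\cap\{\mathbf{rk}(\mathbf X^{RSAA})=\hat p\}$ is null, so $\mathbb P[\mathbf{rk}(\mathbf X^{RSAA})=\hat p]\le\mathbb P[\mathcal E_{\hat p}^c]\le 2\exp(-\hat p(2p+1)\Delta_1(\epsilon))+2\exp(-\tilde cn)$.

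Finally I would peel across ranks. A union bound gives $\mathbb P[\mathbf{rk}(\mathbf X^{RSAA})\ge\tilde p_u]\le\sum_{\hat p=\tilde p_u}^{p}\bigl(2\exp(-\hat p(2p+1)\Delta_1(\epsilon))+2\exp(-\tilde cn)\bigr)$; the $\exp(-\tilde cn)$ terms accumulate to at most $2p\exp(-\tilde cn)$, while the remaining terms form a geometric series with ratio $\exp(-(2p+1)\Delta_1(\epsilon))$, which is well below $\tfrac12$ since $(2p+1)\Delta_1(\epsilon)\ge 3\ln 36$, so its tail from $\tilde p_u$ is at most $4\exp(-\tilde p_u(2p+1)\Delta_1(\epsilon))$. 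Passing to the complement yields the stated bound. The step I expect to be the main obstacle is precisely this probabilistic bookkeeping: Proposition \ref{test proposition important} furnishes a bound that is \emph{uniform over the deterministic ball} $\mathcal B_{\hat p,R}$, yet it must be brought to bear on the \emph{random} matrix $\mathbf X^{RSAA}$. The resolution is the observation that on $\mathcal E_{\hat p}$ the uniform bound automatically applies to $\mathbf X^{RSAA}$ exactly when its rank equals $\hat p$, so the contradiction never requires the concentration event to depend on the realized solution; keeping the resolution $\tilde p=\hat p$ locked to the hypothesized rank throughout, and verifying that the geometric ratio is genuinely summable, are the only delicate points.
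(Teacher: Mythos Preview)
Your proposal is correct and follows essentially the same route as the paper's proof: reduce the penalty via Proposition \ref{Proposition 1} to $\mathbf{rk}(\mathbf X^{RSAA})\,P_\lambda(a\lambda)$, invoke the uniform deviation bound of Proposition \ref{test proposition important} at level $\tilde p=\hat p$ to convert the empirical sublevel inequality into a population one, derive a contradiction with \eqref{condition on value lambda 1} on $\mathcal E_{\hat p}\cap\{\mathbf{rk}(\mathbf X^{RSAA})=\hat p\}$, and then peel over $\hat p$ with the geometric-series estimate. The paper merely packages the same steps into named events $\mathcal E_1,\dots,\mathcal E_{5,\hat p}$; your streamlined treatment of the almost-sure S$^3$ONC and sublevel-set hypotheses is equivalent.
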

\begin{proof} This proof generalizes Proposition EC.3 from \cite{Liu2019} bounding the sparsity of an S$^3$ONC solution to bounding the rank of an S$^3$ONC solution. Though the argument is similar, details are quite different and thus the result is different.
Define $\mathcal B_{R}:=\{\mathbf X\in\mathcal S_p:\,\sigma_{\max}(\mathbf X)\leq R\}$. Define a few events:
\begin{align}\mathcal E_{1}:=&\,\left\{(\tilde{\mathbf X},\,\tilde{\mathbf Z}_1^n)\in\mathcal B_{R}\times \mathcal W^n:\,\mathcal F_{n,\lambda}(\tilde{\mathbf X},\,\tilde{\mathbf Z}_1^n)\leq \mathcal F_{n,\lambda}({\mathbf X}^*,\,\tilde{\mathbf Z}_1^n)+\Gamma\right\},\nonumber
\\\mathcal E_2:=&\,\{\tilde{\mathbf X}\in\mathcal B_{R}:\, \text{$\vert\sigma_j(\tilde{\mathbf X})\vert\notin(0,\,a\lambda)$ for all $j$}\},\nonumber
\\
\mathcal E_{3,\hat p}:=&\,\left\{ \tilde{\mathbf X}\in\mathcal B_{R}:\,{\bf rk}(\tilde{\mathbf X})=\hat p\right\}, \nonumber
\nonumber
\end{align}
where $c$ in $\mathcal E_{5,\hat p}$ is a universal constant defined to be the same as in \eqref{sub-exponential sequence sum}, $\hat p:\,  \tilde p_u\leq\hat p\leq p$ and (thus $\hat p>s$ by the assumption that $\tilde p_u>s$).
For any $(\tilde{\mathbf X},\tilde{\mathbf Z}_1^n)\in\{(\tilde{\mathbf X},\tilde{\mathbf Z}_1^n)\in \mathcal E_{1}\}\cap\{\tilde{\mathbf X}\in \mathcal E_2\cap \mathcal E_{3,p}\}$, where $\tilde{\mathbf Z}_1^n=(\tilde  Z_1,...,\tilde Z_n)$, since   $\tilde{\mathbf X}\in \mathcal E_{3,p}\cap \mathcal E_2$, which means that  $\tilde{\mathbf X}$ has     $\hat p$-many   non-zero singular values and each must not be within the interval $(0,\,a\lambda)$, it holds that
\begin{align}
\mathcal F_n(\tilde{\mathbf X},\tilde{\mathbf Z}_1^n) +\hat pP_\lambda(a\lambda)\leq \frac{1}{n}  \mathcal F_n({\mathbf X^*},\tilde{\mathbf Z}_1^n) +s P_\lambda(a\lambda)+\Gamma,\label{first to combine here}
\end{align}
Notice   that $\mathbf X^*\in\mathcal B_R:\,{\bf rk}(\mathbf X^*)=s<\hat p$ by Assumption \ref{approximate low-rankness}. We may obtain that, for all $\tilde{\mathbf X} \in   \mathcal E_{3,p}$, 
\begin{align}
&\,\frac{1}{n}\sum_{i=1}^n f({\mathbf X}^*,\tilde Z_i) -\frac{1}{n}\sum_{i=1}^nf(\tilde{\mathbf X},\tilde Z_i) \nonumber
\\=&\, \left[\frac{1}{n}\sum_{i=1}^n f({\mathbf X}^*,\tilde Z_i) -\mathbb  F({\mathbf X^*})  \right]+ \left[\mathbb  F (\tilde{\mathbf X})  
-\frac{1}{n}\sum_{i=1}^nf(\tilde{\mathbf X},\tilde Z_i) \right]
+ \left[\mathbb  F ({\mathbf X^*} ) -\mathbb  F(\tilde{\mathbf X}) \right]\nonumber
\\\leq& \, 2\max_{\mathbf X\in   \mathcal E_{3,p}} \left\vert\frac{1}{n}\sum_{i=1}^n f(\mathbf X,\tilde Z_i)-\mathbb   F(\mathbf X )  \right\vert +\mathbb  F({\mathbf X^*}) -\mathbb  F(\tilde{\mathbf X}) \nonumber
\\\leq &\, 2\max_{\mathbf X\in   \mathcal E_{3,p}} \,\,\left\vert\frac{1}{n}\sum_{i=1}^n f(\mathbf X,\tilde Z_i)-\mathbb  F({\mathbf X})  \right\vert, \label{resolution inequality}
\end{align}
where the last inequality is due to $\mathbb  F({\mathbf X^*}) \leq \mathbb  F({\mathbf X})$ for all $\mathbf X\in\mathcal S_p$ by the definition of $\mathbf X^*$.
Define that
\begin{align}
\mathcal E_{4}:=&\,\left\{(\tilde{\mathbf X},\,\tilde{\mathbf Z}_1^n)\in\mathcal B_{R}\times\mathcal W^n:\,\text{$\tilde{\mathbf X}$ satisfies  S$^3$ONC$(\tilde{\mathbf Z}_1^n)$}\right\}\nonumber
\\
\mathcal E_{5,\hat p}:=&\,\left\{\tilde{\mathbf Z}_1^n\in\mathcal W^n:\,\max_{\mathbf X\in\mathcal B_{R}:\,{\bf rk}(\mathbf X)\leq \hat p}\left\vert\mathcal F_n(\mathbf X,\tilde{\mathbf Z}_1^n)-\mathbb  F (\mathbf X)\right\vert\leq  {\frac{K }{\sqrt{n}}}\sqrt{\frac{2 \hat p(2p+1)}{c}\Delta_1(\epsilon)} \right.\nonumber
\\&
\left.\qquad\qquad\qquad\qquad\qquad\qquad\qquad\qquad\qquad\qquad\qquad+\frac{K}{n}\cdot\frac{2 \hat p(2p+1)}{c}\Delta_1(\epsilon)+\epsilon\right\},\nonumber
\end{align}

Now let us examine the following set:
\begin{align}\Lambda=&\{(\tilde{\mathbf X},\tilde{\mathbf Z}_1^n):\,(\tilde{\mathbf X},\tilde{\mathbf Z}_1^n)\in\mathcal E_{1}\cap \mathcal E_{4}\}\cap\{(\tilde{\mathbf X},\tilde{\mathbf Z}_1^n):\,\tilde{\mathbf X} \in\mathcal E_{3,p}\cap \mathcal E_2\}\cap\{(\tilde{\mathbf X},\tilde{\mathbf Z}_1^n):\,\tilde{\mathbf Z}_{1}^n\in \mathcal E_{5,\hat p}\}.\nonumber
\end{align} 
Combined with \eqref{first to combine here} and \eqref{resolution inequality},  $\Lambda\neq \emptyset\Longrightarrow
(\hat p-s)\cdot P_\lambda(a\lambda)\leq {\frac{2K }{\sqrt{n}}}\sqrt{\frac{2 \hat p(2p+1)}{c}\Delta_1(\epsilon)} +\frac{2K}{n}\cdot\frac{2 \hat p(2p+1)}{c}\Delta_1(\epsilon)+2\epsilon+\Gamma$, which contradicts with \eqref{condition on value lambda 1} for all $\hat p:\, \tilde p_u\leq \hat p\leq p$.  Now  we recall the definition of  $\mathbf X^{RSAA}\in \mathcal B_{R}$, which is  a solution that satisfies the {\it S$^3$ONC}$(\mathbf Z_1^n)$, w.p.1., and $\mathcal F_{n,\lambda}(\mathbf X^{RSAA},\,\tilde{\mathbf Z}_1^n)\leq \mathcal F_{n,\lambda}({\mathbf X}^*,\,\tilde{\mathbf Z}_1^n)+\Gamma$, w.p.1. Invoking Proposition \ref{Proposition 1}, we have  $\mathbb P\left[(\mathbf X^{RSAA},{\mathbf Z}_1^n)\in\mathcal E_{1}\cap \mathcal E_{4},\, \mathbf X^{RSAA} \in \mathcal E_2\right]=1$. Hence,
 \begin{align}
 0=&\,\mathbb P\left[\Lambda\right]\nonumber
 \\\geq &\,1-\mathbb P\left[ \mathbf X^{RSAA} \notin \mathcal E_{3,p}  \right]-\mathbb P\left[\mathbf Z_1^n \notin \mathcal E_{5,\hat p}\right]-\left\{1-\mathbb P\left[(\mathbf X^{RSAA},{\mathbf Z}_1^n)\in\mathcal E_{1}\cap \mathcal E_{4},\, \mathbf X^{RSAA} \in \mathcal E_2 \right]\right\},
\nonumber
 \end{align} 
 for all $\hat p:\, \tilde p_u\leq \hat p\leq p$.  The above then implies that
$\mathbb P\left[\mathbf Z_1^n \notin \mathcal E_{5,\hat p} \right]\geq \mathbb P\left[ \mathbf X^{RSAA} \in \mathcal E_{3,p} \right]$ for all $\hat p:\, \tilde p_u\leq \hat p\leq p$.
Therefore,   
$\mathbb P[{\bf rk}(\mathbf X^{RSAA})= \hat p]\leq 1-\mathbb P\left[ \mathbf Z_1^n \in \mathcal E_{5,\hat p}\right]$
for all $\hat p:\, \tilde p_u\leq \hat p\leq p$. Together with Proposition \ref{test proposition important}, we have that
\begin{align}
&\mathbb P[{\bf rk}(\mathbf X^{RSAA})\leq \tilde p_u-1]=\mathbb P[{\bf rk}(\mathbf X^{RSAA})\notin\{\tilde p_u,\, \tilde p_u+1,..., p\}] \nonumber
\\= &1-\mathbb P\left[\bigcup_{\hat p=\tilde p_u}^{p}\{{{\bf rk}(\mathbf X^{RSAA})}=\hat p\}\right]\geq 1- \sum_{\hat p=\tilde p_u}^{p} \mathbb P[{\bf rk}(\mathbf X^{RSAA})=\hat p]
\geq  1-\sum_{\hat p=\tilde p_u}^{p}\left(1-\mathbb P\left[\mathbf Z_1^n \in \mathcal E_{5,\hat p}\right]\right) \nonumber
\\\geq & 1-2({p}-\tilde p_u+1)\exp(-\tilde cn)-\sum_{\hat p=\tilde p_u}^{p} 2\exp\left(- \hat p(2p+1)\cdot\Delta_1(\epsilon)\right).\label{geometric sequence}
\end{align}
where $\tilde c>0$ is some universal constant. Observing that $\Delta_1(\epsilon)=\ln\left(\frac{18\cdot  (K_C+\mathcal  C_\mu)\cdot p\cdot R}{\epsilon}\right)>1$  by observing that the above \eqref{geometric sequence} involves a geometric sequence, we have
\begin{align}
\mathbb P[{\bf rk}(\mathbf X^{RSAA})\leq \tilde p_u-1]\geq & 1-\frac{2\exp\left(- \tilde p_u (2p+1)\Delta_1(\epsilon)\right)}{1- \exp\left(-(2p+1)\Delta_1(\epsilon)\right)}-2 {p} \exp(-\tilde cn). \label{starting point}
\end{align}
Further noting that $\frac{2\exp\left(- \tilde p_u (2p+1)\Delta_1(\epsilon)\right)}{1- \exp\left(-(2p+1)\Delta_1(\epsilon)\right)}\leq 4\exp\left(- \tilde p_u(2p+1)\Delta_1(\epsilon)\right)$, we then have the desired result.  
 \end{proof}


\begin{proposition}\label{core proposition 1}
Let   $$\Delta_1(\epsilon):=\ln\left(\frac{18\cdot  (K_C+\mathcal  C_\mu)\cdot p\cdot R}{\epsilon}\right).$$
Assume that (i) the solution $\mathbf X^{RSAA}$ satisfies {\it S$^3$ONC}$(\mathbf Z_1^n)$ almost surely; (ii) $\mathcal F_{n,\lambda}(\mathbf X^{RSAA},\,{\mathbf Z}_1^n)\leq \mathcal F_{n,\lambda}({{\mathbf X}}^*,\,{\mathbf Z}_1^n)+\Gamma$ with probability one;  and (iii)  for some integer $\tilde p_u:\, \tilde p_u>s$, it holds that
\begin{align}
\hat p> s+\frac{4K\cdot  \hat p\cdot (2p+1)}{cn\cdot P_\lambda(a\lambda) }\Delta_1(\epsilon)
+{\frac{2K}{\sqrt{n}\cdot P_\lambda(a\lambda) }}\sqrt{\frac{2\hat p\cdot (2p+1)}{c}\Delta_1(\epsilon)}+\frac{\Gamma+2\epsilon}{P_\lambda(a\lambda)},\label{condition on value lambda}
\end{align}
for all $\tilde p:\,  \tilde p_u\leq \tilde p\leq p$, any $\Gamma\geq 0$,  and any $\epsilon\in(0,\,1]$.
It then holds that
\begin{multline}
\mathbb F(\mathbf X^{RSAA})-\mathbb  F({{\mathbf X}}^*)
\leq \frac{4K\cdot  \hat p\cdot (p+1)}{cn}\Delta_1(\epsilon)
\\+{\frac{2K}{\sqrt{n}}}\sqrt{\frac{2\hat p\cdot (2p+1)}{c}\Delta_1(\epsilon)}+\Gamma+2\epsilon+s  P_\lambda(a\lambda), \label{first results}
\end{multline}
with probability at least
$
P^*:= \,1-2 (p+1) \exp(-\tilde cn)
-6\exp\left(- \tilde p_u(2p+1)\Delta_1(\epsilon)\right)$ for some universal constant $\tilde c>0$.
\end{proposition}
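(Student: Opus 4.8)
The plan is to stitch together the two auxiliary results already established: Proposition \ref{bound dimensions}, which caps the rank of the stationary point, and Proposition \ref{test proposition important}, which gives uniform control of $|\mathcal F_n-\mathbb F|$ over the low-rank ball $\mathcal B_{\tilde p_u,R}$. Hypotheses (i)--(iii) here are precisely what Proposition \ref{bound dimensions} requires; in particular condition \eqref{condition on value lambda} is identical to \eqref{condition on value lambda 1}. I would therefore first invoke Proposition \ref{bound dimensions} to conclude that ${\bf rk}(\mathbf X^{RSAA})\leq \tilde p_u-1$ holds on an event $\mathcal A$ of probability at least $1-2p\exp(-\tilde cn)-4\exp(-\tilde p_u(2p+1)\Delta_1(\epsilon))$.

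Next I would record the \emph{basic inequality} produced by the MCP thresholding rule. By Proposition \ref{Proposition 1} every singular value of $\mathbf X^{RSAA}$ is either $0$ or at least $a\lambda$, so the penalty saturates at its maximum on each nonzero singular value and $\sum_{j}P_\lambda(\sigma_j(\mathbf X^{RSAA}))={\bf rk}(\mathbf X^{RSAA})\,P_\lambda(a\lambda)$; meanwhile $\sum_j P_\lambda(\sigma_j(\mathbf X^*))\leq s\,P_\lambda(a\lambda)$ since $\mathbf X^*$ has only $s$ nonzero singular values (Assumption \ref{approximate low-rankness}) and $P_\lambda\leq P_\lambda(a\lambda)$ everywhere. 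Substituting these into assumption (ii) and discarding the nonnegative term ${\bf rk}(\mathbf X^{RSAA})\,P_\lambda(a\lambda)$ on the left yields the sample-average bound $\mathcal F_n(\mathbf X^{RSAA})-\mathcal F_n(\mathbf X^*)\leq s\,P_\lambda(a\lambda)+\Gamma$.

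The third step converts this into a population statement through the decomposition
\[
\mathbb F(\mathbf X^{RSAA})-\mathbb F(\mathbf X^*)=\big[\mathbb F(\mathbf X^{RSAA})-\mathcal F_n(\mathbf X^{RSAA})\big]+\big[\mathcal F_n(\mathbf X^{RSAA})-\mathcal F_n(\mathbf X^*)\big]+\big[\mathcal F_n(\mathbf X^*)-\mathbb F(\mathbf X^*)\big].
\]
On $\mathcal A$ we have ${\bf rk}(\mathbf X^{RSAA})\leq\tilde p_u-1<\tilde p_u$, and since ${\bf rk}(\mathbf X^*)=s<\tilde p_u$ as well, both matrices lie in $\mathcal B_{\tilde p_u,R}$. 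Taking $\hat p=\tilde p_u$ in Proposition \ref{test proposition important} then bounds each of the two outer brackets by the common quantity $\tfrac{K}{\sqrt n}\sqrt{\tfrac{2\tilde p_u(2p+1)}{c}\Delta_1(\epsilon)}+\tfrac{2K\tilde p_u(2p+1)}{cn}\Delta_1(\epsilon)+\epsilon$ on a further event $\mathcal B$ of probability at least $1-2\exp(-\tilde p_u(2p+1)\Delta_1(\epsilon))-2\exp(-\tilde cn)$. Adding the doubled concentration quantity to the middle bracket bound $s\,P_\lambda(a\lambda)+\Gamma$ delivers \eqref{first results}.

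Finally, a single union bound over the complements of $\mathcal A$ and $\mathcal B$ yields the stated probability: the $\exp(-\tilde cn)$ contributions total $(2p+2)\exp(-\tilde cn)=2(p+1)\exp(-\tilde cn)$ and the $\Delta_1$-exponential contributions total $6\exp(-\tilde p_u(2p+1)\Delta_1(\epsilon))$, giving exactly $P^*$. I expect the only genuine care to lie in the bookkeeping rather than in any new idea: verifying that $\mathbf X^{RSAA}$ really lands in $\mathcal B_{\tilde p_u,R}$ on $\mathcal A$ so that the \emph{uniform} bound may legitimately be instantiated at the (random) point $\mathbf X^{RSAA}$, confirming that the two good events are combined by a plain union bound without any independence assumption, and tracking the factor of two that doubles the concentration prefactors (e.g.\ turning $\tfrac{2K}{cn}\Delta_1(\epsilon)\,\tilde p_u(2p+1)$ into its doubled form) so as to reproduce the coefficients displayed in \eqref{first results}.
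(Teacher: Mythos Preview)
Your proposal is correct and follows essentially the same route as the paper: invoke Proposition~\ref{bound dimensions} for the rank cap, derive the basic sample-average inequality from hypothesis~(ii) together with $0\le P_\lambda\le P_\lambda(a\lambda)$, then apply Proposition~\ref{test proposition important} at level $\tilde p_u$ to convert to the population gap, and finish with the union bound producing the $2(p+1)$ and $6$ coefficients in $P^*$. The paper's proof is terser (it does not explicitly appeal to Proposition~\ref{Proposition 1} for the basic inequality, since dropping the nonnegative penalty on the left already suffices), but the structure and bookkeeping are identical to yours.
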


\begin{proof}   
We first observe that $\Delta_1(\epsilon):=\ln\left(\frac{18 \cdot  (K_C+\mathcal  C_\mu)\cdot p\cdot  R}{\epsilon}\right)\geq\ln 36$ because  $p\geq 1$,  ${K}_C,\,C_\mu,\,R\geq1$ and  $0<\epsilon\leq 1$. By assumption, $$\mathcal F_{n,\lambda}(\mathbf X^{RSAA},\,{\mathbf Z}_1^n)\leq \mathcal F_{n,\lambda}({\mathbf X}^*,\,{\mathbf Z}_1^n)+\Gamma,$$  w.p.1., $P_\lambda(t)\geq 0$ for all $t\geq 0$, and   ${\bf rk}(\mathbf X^*) =s$, yields that
$\frac{1}{n}\sum_{i=1}^n f(\mathbf X^{RSAA},Z_i) 
 \leq  \frac{1}{n}  \sum_{i=1}^n f({\mathbf X^*},Z_i) +s P_\lambda(a\lambda)+\Gamma, \,a.s.$
Furthermore, conditioning on the events that (a) ${\bf rk}(\mathbf X^{RSAA})\leq \tilde p_u$,  (b) 
$
  \max_{\mathbf X\in \mathcal B_{\tilde p_u,R}}\left\vert\frac{1}{n}\sum_{i=1}^n f(\mathbf X,Z_i)-\mathbb E\left[\frac{1}{n}\sum_{i=1}^n f(\mathbf X,Z_i)\right]\right\vert  
\leq {\frac{K}{\sqrt{n}}}\sqrt{\frac{\tilde p_u\cdot (2p+1)}{c}\Delta_1(\epsilon)}  
 +\frac{K}{n}\frac{\tilde p_u\cdot (2p+1)}{c}\Delta_1(\epsilon)+\epsilon,$
 we obtain that
$
\mathbb F(\mathbf X^{RSAA})-\mathbb F({\mathbf X^*}) 
\leq s\cdot P_\lambda(a\lambda)+ {\frac{2K }{\sqrt{n}}}\sqrt{\frac{2 \tilde p_u\cdot (2p+1)}{c}\Delta_1(\epsilon)} \nonumber
+\frac{4K}{n}\frac{\tilde p_u\cdot (2p+1)}{c}\Delta_1(\epsilon)+2\epsilon+\Gamma$, a.s.
Further invoking Propositions \ref{test proposition important} and \ref{bound dimensions}, we have that  both events  hold simultaneously with probability at least as in $P^*$, which verifiably implies the claimed results.
\end{proof}

\subsection{Useful Lemmata}
 \begin{lemma}\label{new lemma result}
Under Assumption \ref{Lipschitz condition}, it holds that, for some universal constant $c>0$,  with probability at least $1-2\exp(-c\cdot n)$, it holds  that
\begin{align}
&\max_{\substack{{\mathbf X}_1,\,{\mathbf X}_2\in\mathcal S_p
\\\cap \{{\mathbf X}:\,\sigma_{\max}({\mathbf X})\leq R,\\
\,\Vert\mathbf X_1-\mathbf X_2\Vert\leq \tau\}
}}\{\vert \mathcal F_n({\mathbf X}_1, \mathbf Z_1^n)-\mathcal F_n({\mathbf X}_2, \mathbf Z_1^n)\vert\}\leq \left(2 K_C+\mathcal  C_\mu\right)\cdot \tau.\nonumber
\end{align}
for any given $\tau\geq 0$.
\end{lemma}
 \begin{proof}  This proof follows a closely similar lemma by \cite{Shapiro:book}. Similar proof has also been provided by \cite{Liu2019}, but some subtle differences in the problem context present and thus we redo the the proof herein.
By Assumption \ref{Lipschitz condition},   for some $c>0$, 
\begin{align}
\mathbb P\left(\left\vert \sum_{i=1}^n \frac{1}{n}\left\{\mathcal C(Z_i)-\mathbb E[\mathcal C(Z_i)] \right\}\right\vert>K_C\left(\frac{t}{n}+\sqrt{\frac{t}{n}}\right)\right)\leq 2\exp\left(-c t\right),\qquad\forall t\geq 0.\nonumber
\end{align}
If we let $t:=n$ and observe that $\mathbb E[\mathcal C(Z_i)]\leq \mathcal  C_\mu$, we immediately have that
\begin{align}
\mathbb P\left(\sum_{i=1}^n\frac{\mathcal C(Z_i)}{n}\leq 2 K_C+\mathcal  C_\mu\right)\leq 1-2\exp\left(-c n\right).\label{prob lipschitz bound}
\end{align}
If we invoke Assumption \ref{Lipschitz condition} again  given the event that $\left\{\sum_{i=1}^n\frac{\mathcal C(Z_i)}{n}\leq 2 K_C+\mathcal  C_\mu\right\}$, we have that for any ${\mathbf X}_1,{\mathbf X}_2\in\mathcal S_p$,
\begin{align}
&\max_{\substack{{\mathbf X}_1,\,{\mathbf X}_2\in\mathcal S_p
\\\cap \{{\mathbf X}:\,\sigma_{\max}({\mathbf X})\leq R,\\
\,\Vert\mathbf X_1-\mathbf X_2\Vert\leq \tau\}
}}\left\vert  \frac{1}{n}\sum_{i=1}^n  f({\mathbf X}_1,\, Z_i)-\frac{1}{n}\sum_{i=1}^n  f({\mathbf X}_2,\, Z_i)\right\vert\nonumber
\\\leq&\,\max_{\substack{{\mathbf X}_1,\,{\mathbf X}_2\in\mathcal S_p
\\\cap \{{\mathbf X}:\,\sigma_{\max}({\mathbf X})\leq R,\\
\,\Vert\mathbf X_1-\mathbf X_2\Vert\leq \tau\}
}} \frac{1}{n}\sum_{i=1}^n\Vert  f({\mathbf X}_1,\, Z_i)- f({\mathbf X}_2,\, Z_i)\Vert\nonumber
\\\leq&\,\max_{\substack{{\mathbf X}_1,\,{\mathbf X}_2\in\mathcal S_p
\\\cap \{{\mathbf X}:\,\sigma_{\max}({\mathbf X})\leq R,\\
\,\Vert\mathbf X_1-\mathbf X_2\Vert\leq \tau\}
}}\frac{1}{n}\sum_{i=1}^n \mathcal C(Z_i)\Vert {\mathbf X}_1-{\mathbf X}_2\Vert\leq (2K_C+ \mathcal  C_\mu)\cdot \tau\nonumber
\end{align} 
We have the desired result by combining the above with \eqref{prob lipschitz bound}.
  \end{proof}
  
   \begin{lemma}\label{expected cost result Matrix}
Under Assumption \ref{Lipschitz condition},  for all $${\mathbf X}_1,\,{\mathbf X}_2\in \mathcal S_p:\,\,\max\{\sigma_{\max}({\mathbf X}_1),\,\sigma_{\max}({\mathbf X}_2)\}\leq R,$$
it holds that
\begin{align}
\left\vert \mathbb E[ \mathcal F_n({\mathbf X}_1, \mathbf Z_1^n)]-\mathbb E[\mathcal F_n({\mathbf X}_2, \mathbf Z_1^n)]\right\vert\leq \mathcal  C_\mu\cdot \Vert {\mathbf X}_1-{\mathbf X}_2\Vert.
\end{align}
\end{lemma}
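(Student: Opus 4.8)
The plan is to reduce the claim to a direct Lipschitz estimate on the true objective $\mathbb{F}$, exploiting that averaging over i.i.d.\ samples leaves the expectation unchanged. First I would note that, since $Z_1,\dots,Z_n$ are i.i.d.\ copies of $Z$, for any fixed $\mathbf{X}\in\mathcal{S}_p$ linearity of expectation gives $\mathbb{E}[\mathcal{F}_n(\mathbf{X},\mathbf{Z}_1^n)]=\frac{1}{n}\sum_{i=1}^n\mathbb{E}[f(\mathbf{X},Z_i)]=\mathbb{E}[f(\mathbf{X},Z)]=\mathbb{F}(\mathbf{X})$. Hence the quantity to be bounded is simply $\vert\mathbb{F}(\mathbf{X}_1)-\mathbb{F}(\mathbf{X}_2)\vert$, and the sample size $n$ drops out entirely.

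Next I would bring the absolute value inside the expectation. Writing $\mathbb{F}(\mathbf{X}_1)-\mathbb{F}(\mathbf{X}_2)=\mathbb{E}[f(\mathbf{X}_1,Z)-f(\mathbf{X}_2,Z)]$, the triangle inequality for integrals (equivalently, Jensen's inequality applied to the convex map $u\mapsto\vert u\vert$) yields $\vert\mathbb{F}(\mathbf{X}_1)-\mathbb{F}(\mathbf{X}_2)\vert\le\mathbb{E}[\vert f(\mathbf{X}_1,Z)-f(\mathbf{X}_2,Z)\vert]$. At this point I would invoke the pointwise Lipschitz bound of Assumption \ref{Lipschitz condition}, namely $\vert f(\mathbf{X}_1,z)-f(\mathbf{X}_2,z)\vert\le\mathcal{C}(z)\Vert\mathbf{X}_1-\mathbf{X}_2\Vert$ for almost every $z$, which holds for the given $\mathbf{X}_1,\mathbf{X}_2$ (indeed for all $\mathbf{X}_1,\mathbf{X}_2\in\mathcal{S}_p$). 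Since $\Vert\mathbf{X}_1-\mathbf{X}_2\Vert$ is deterministic it factors out of the expectation, giving $\vert\mathbb{F}(\mathbf{X}_1)-\mathbb{F}(\mathbf{X}_2)\vert\le\mathbb{E}[\mathcal{C}(Z)]\,\Vert\mathbf{X}_1-\mathbf{X}_2\Vert\le\mathbb{E}[\vert\mathcal{C}(Z)\vert]\,\Vert\mathbf{X}_1-\mathbf{X}_2\Vert\le\mathcal{C}_\mu\Vert\mathbf{X}_1-\mathbf{X}_2\Vert$, where the last step uses the moment bound $\mathbb{E}[\vert\mathcal{C}(Z)\vert]\le\mathcal{C}_\mu$ from Assumption \ref{Lipschitz condition}. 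This is exactly the asserted inequality.

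There is no genuine analytic obstacle here; the result is essentially a one-line consequence of the pointwise Lipschitz hypothesis together with monotonicity of the integral. The only point requiring a word of care is well-definedness: one must check that all expectations are finite so that the manipulations are legitimate. This is guaranteed because $\mathbb{F}(\mathbf{X})$ is assumed finite-valued on $\mathcal{S}_p$ and because $\mathbb{E}[\vert\mathcal{C}(Z)\vert]\le\mathcal{C}_\mu<\infty$ dominates the integrand $\vert f(\mathbf{X}_1,Z)-f(\mathbf{X}_2,Z)\vert$, legitimizing both the interchange of expectation and sum and the passage of the absolute value inside the integral. The spectral-radius restriction $\sigma_{\max}(\mathbf{X}_i)\le R$ plays no active role in the argument and is presumably retained only so that the statement matches the domain on which the companion estimate, Lemma \ref{new lemma result}, is formulated.
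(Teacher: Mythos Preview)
Your argument is correct and follows essentially the same route as the paper: apply Jensen's inequality (convexity of $|\cdot|$) to move the absolute value inside the expectation, invoke the pointwise Lipschitz bound from Assumption~\ref{Lipschitz condition}, and conclude with $\mathbb E[\vert\mathcal C(Z)\vert]\le\mathcal C_\mu$. The only cosmetic difference is that you first reduce $\mathbb E[\mathcal F_n(\mathbf X,\mathbf Z_1^n)]$ to $\mathbb F(\mathbf X)$ via the i.i.d.\ assumption, whereas the paper carries the sample average through and bounds $\mathbb E\big[\frac{1}{n}\sum_i\mathcal C(Z_i)\big]$ at the end; the substance is identical.
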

 \begin{proof}  This proof follows a closely similar lemma by \cite{Shapiro:book}. Again, a similar proof has also been provided by \cite{Liu2019}, but some subtle differences make it necessary to conduct the repetition herein.
As per Assumption \ref{Lipschitz condition}, it holds that
\begin{align}
\mathbb E\left[\vert \mathcal  F_n({\mathbf X}_1, \mathbf Z_1^n)-\mathcal  F_n({\mathbf X}_2, \mathbf Z_1^n)\vert\right]\leq \mathbb E\left[ \sum_{i=1}^n\frac{\mathcal C(Z_i)}{n}\Vert{\mathbf X}_1-{\mathbf X}_2\Vert\right]. \nonumber
\end{align}
Due to the convexity of the function $\vert\cdot\vert$, it therefore holds that
\begin{align}
\left\vert \mathbb E\left[\mathcal F_n({\mathbf X}_1, \mathbf Z_1^n)\right]-\mathbb E\left[\mathcal F_n({\mathbf X}_2, \mathbf Z_1^n)\right]\right\vert\leq&\,\mathbb E\left[ \sum_{i=1}^n\frac{\mathcal C(Z_i)}{n}\Vert{\mathbf X}_1-{\mathbf X}_2\Vert\right] \nonumber
\\=& \, \mathbb E\left[ \sum_{i=1}^n\frac{\mathcal C(Z_i)}{n}\right]\cdot\Vert{\mathbf X}_1-{\mathbf X}_2\Vert.\nonumber
\end{align}
Invoking Assumption \ref{Lipschitz condition} again, it holds that $\mathbb E\left[\sum_{i=1}^n\frac{\mathcal C(Z_i)}{n}\right]= \frac{\sum_{i=1}^n\mathbb E[\mathcal C(Z_i)]}{n}\leq \mathcal  C_\mu$ for all $i=1,...,n$, which immediately leads to the desired result.
  \end{proof}
  
  \begin{lemma}\label{initial gap}
Denote
that $ {\mathbf X}^{\ell_1}_{\lambda}\in\underset{{\mathbf X}\in\mathcal S^p}{\arg\,\min}\, \mathcal F_{n}(\mathbf X,\, \mathbf Z_1^n)+\lambda\left\Vert \mathbf X \right\Vert_{*},$
it holds that  $\mathcal F_{n,\lambda}( {\mathbf X}^{\ell_1}_{\lambda},\,\mathbf Z_1^n)\leq  \mathcal F_{n,\lambda}({\mathbf X}^*,\, \mathbf Z_1^n)+\lambda \Vert {\mathbf X}^* \Vert_*$.
\end{lemma}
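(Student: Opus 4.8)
The plan is to sandwich the RSAA objective between the nuclear-norm-regularized objective and then exploit the optimality of $\mathbf X^{\ell_1}$ in \eqref{nuclear problem}. The entire argument rests on two elementary pointwise properties of the MCP, namely that it is nonnegative and that it is dominated by the linear penalty $\lambda x$.

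First I would establish the domination $\sum_{j=1}^p P_\lambda(\sigma_j(\mathbf X)) \leq \lambda\Vert\mathbf X\Vert_*$ for every $\mathbf X\in\mathcal S_p$. This follows directly from the integral representation $P_\lambda(x)=\int_0^x\frac{[a\lambda-t]_+}{a}\,dt$: since the integrand is bounded above by $\frac{a\lambda}{a}=\lambda$ for every $t\geq 0$, we obtain $P_\lambda(x)\leq \lambda x$ for all $x\geq 0$; summing over the singular values and recalling $\Vert\mathbf X\Vert_*=\sum_j\sigma_j(\mathbf X)$ yields the claim. Applying this at $\mathbf X=\mathbf X^{\ell_1}$ gives
$$\mathcal F_{n,\lambda}(\mathbf X^{\ell_1},\mathbf Z_1^n)=\mathcal F_n(\mathbf X^{\ell_1},\mathbf Z_1^n)+\sum_{j=1}^p P_\lambda(\sigma_j(\mathbf X^{\ell_1}))\leq \mathcal F_n(\mathbf X^{\ell_1},\mathbf Z_1^n)+\lambda\Vert\mathbf X^{\ell_1}\Vert_*.$$

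Next I would invoke the optimality of $\mathbf X^{\ell_1}$. Since $\mathbf X^{\ell_1}$ minimizes $\mathcal F_n(\mathbf X,\mathbf Z_1^n)+\lambda\Vert\mathbf X\Vert_*$ over all $\mathbf X\in\mathcal S_p$ and $\mathbf X^*\in\mathcal S_p$ is a feasible competitor, the right-hand side above is bounded by $\mathcal F_n(\mathbf X^*,\mathbf Z_1^n)+\lambda\Vert\mathbf X^*\Vert_*$. Finally, because $P_\lambda(\,\cdot\,)\geq 0$ on $[0,\infty)$, we have $\mathcal F_n(\mathbf X^*,\mathbf Z_1^n)\leq \mathcal F_{n,\lambda}(\mathbf X^*,\mathbf Z_1^n)$. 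Chaining the three inequalities produces $\mathcal F_{n,\lambda}(\mathbf X^{\ell_1},\mathbf Z_1^n)\leq \mathcal F_{n,\lambda}(\mathbf X^*,\mathbf Z_1^n)+\lambda\Vert\mathbf X^*\Vert_*$, which is the desired conclusion.

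There is essentially no hard part here, since everything reduces to monotonicity and nonnegativity of a one-dimensional function together with a single optimality comparison; the only thing one must be careful to get right is the direction of the MCP domination (that $P_\lambda(x)\leq \lambda x$ rather than the reverse), which is exactly what makes the nuclear-norm minimizer an admissible warm start whose RSAA value is controlled.
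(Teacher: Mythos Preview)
Your proposal is correct and follows essentially the same route as the paper: both arguments rest on the two pointwise bounds $0\leq P_\lambda(t)\leq \lambda t$ (derived from the integral form of the MCP) together with the optimality of $\mathbf X^{\ell_1}$ in \eqref{nuclear problem}, and both chain these in the same order.
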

 \begin{proof}  
 This proof generalizes a similar one in \cite{Liu2019} from sparsity-inducing penalty to low-rankness-inducing penalty; that is, from $\ell_1$ regularization to nuclear norm-based regularization.
As per Assumption \ref{Lipschitz condition}, it holds that
We first invoke the definition of $P_\lambda$ to obtain
  \begin{align}
0\leq P_\lambda(t)=\int_0^{t}\frac{[a\lambda-\theta]_+}{a}d\theta\leq\int_0^{t}\frac{a\lambda}{a}d\theta=\lambda\cdot t.\label{By MCP definition here}
\end{align}
for all $t\geq 0$.
Secondly, by the definition of $ {\mathbf X}^{\ell_1}_{\lambda}$,  
\begin{align}
 \mathcal F_{n}( {\mathbf X}^{\ell_1}_{\lambda},\, \mathbf Z_1^n)+\lambda\Vert    {\mathbf X}^{\ell_1}_{\lambda} \Vert_*\leq  \mathcal F_{n}({\mathbf X}^{*},\, \mathbf Z_1^n)+\lambda\Vert {\mathbf X}^{*} \Vert_*.\label{small lemma first inequality}
\end{align}
Combining \eqref{By MCP definition here} and \eqref{small lemma first inequality}, it holds that
\begin{align}
&\, \mathcal F_{n}( {\mathbf X}^{\ell_1}_{\lambda},\, \mathbf Z_1^n)+\sum_{j=1}^pP_\lambda\left(\vert \sigma_j( {\mathbf X}^{\ell_1}_{\lambda})\vert\right) \nonumber
 \leq \, \mathcal F_{n}( {\mathbf X}^{\ell_1}_{\lambda},\, \mathbf Z_1^n)+\sum_{j=1}^p\lambda\cdot \vert \sigma_j( {\mathbf X}^{\ell_1}_{\lambda})\vert \nonumber
 \\\leq &\, \mathcal F_{n}({\mathbf X}^{*},\, \mathbf Z_1^n)+ \sum_{j=1}^pP_\lambda\left(\vert \sigma_j({\mathbf X}^*)\vert\right)+\lambda\Vert {\mathbf X}^{*} \Vert_*, \nonumber
\end{align}
as desired.
  \end{proof}
  
  \begin{lemma}\label{low rank epsilon net}
  Let $S_{r,R}:=\{X\in\Re^{p\times p}:\,{\bf rk}(X)\leq r,\,\sigma_{\max}(X)\leq R \}$. Then, in terms of the Frobenius norm, there exists an $\epsilon$-net $\bar S_r$ obeying $\vert \bar S_r\vert\leq  \left(\frac{9\sqrt{r}R}{\epsilon}\right)^{(2p+1)r}$.
  \end{lemma}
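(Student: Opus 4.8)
The plan is to follow the classical volumetric $\epsilon$-net construction for low-rank matrices (essentially Lemma 3.1 of \cite{Candes2011}), adapted so as to carry the spectral-radius bound $R$. The starting point is the singular value decomposition: every $X\in S_{r,R}$ can be written as $X=U\Sigma V^{\top}$ with $U,V\in\Re^{p\times r}$ having orthonormal columns and $\Sigma=\mathrm{diag}(\sigma_1,\dots,\sigma_r)$ with $0\le\sigma_j\le\sigma_{\max}(X)\le R$, so that $\|\Sigma\|_F\le\sqrt{r}\,R$. The idea is to discretize the three factors $U$, $\Sigma$, $V$ separately and then take $\bar S_r$ to be the set of all products $\bar U\bar\Sigma\bar V^{\top}$ of net points.

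Concretely, I would first cover the diagonal part $\{\Sigma=\mathrm{diag}(\sigma):\|\Sigma\|_F\le\sqrt{r}\,R\}$ --- a Euclidean ball of radius $\sqrt{r}\,R$ in $\Re^r$ --- by a Frobenius $\tfrac{\epsilon}{3}$-net of cardinality at most $(9\sqrt{r}\,R/\epsilon)^{r}$; and then cover the manifolds of $p\times r$ matrices with orthonormal columns, for both $U$ and $V$, measuring distance in the maximum-column norm $\|M\|_{1,2}:=\max_j\|Me_j\|_2$, at scale $\tfrac{\epsilon}{3\sqrt{r}\,R}$. Since such a manifold lies inside the $\|\cdot\|_{1,2}$-unit ball, which is a product of $r$ Euclidean unit balls in $\Re^p$, the standard volume bound produces nets $\bar U,\bar V$ of cardinality at most $(9\sqrt{r}\,R/\epsilon)^{pr}$ each, whose points again have orthonormal columns. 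Multiplying the three cardinalities gives exactly $(9\sqrt{r}\,R/\epsilon)^{(2p+1)r}$.

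The heart of the argument is to verify that this product set is a genuine Frobenius $\epsilon$-net. Given $X=U\Sigma V^{\top}$, pick the nearby net points and telescope
\[
U\Sigma V^{\top}-\bar U\bar\Sigma\bar V^{\top}
=(U-\bar U)\Sigma V^{\top}+\bar U(\Sigma-\bar\Sigma)V^{\top}+\bar U\bar\Sigma(V-\bar V)^{\top}.
\]
I would bound each of the three terms by $\tfrac{\epsilon}{3}$. The key algebraic observation is that in each term the diagonal factor sits \emph{adjacent} to a difference, so that after discarding an orthonormal factor (left/right multiplication by $V^{\top}$ or $\bar U$, which preserves the Frobenius norm onto its range) one is left with a column- or row-weighted sum: for instance $\|(U-\bar U)\Sigma\|_F^2=\sum_j\sigma_j^2\,\|(U-\bar U)e_j\|_2^2\le\|U-\bar U\|_{1,2}^2\,\|\Sigma\|_F^2$, and likewise $\|\bar U(\Sigma-\bar\Sigma)\|_F\le\|\bar U\|_{1,2}\,\|\Sigma-\bar\Sigma\|_F$ and $\|\bar\Sigma(V-\bar V)^{\top}\|_F\le\|\bar\Sigma\|_F\,\|V-\bar V\|_{1,2}$. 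Feeding in the chosen scales together with $\|\Sigma\|_F,\|\bar\Sigma\|_F\le\sqrt{r}\,R$ and $\|\bar U\|_{1,2}\le1$ makes each term $\le\tfrac{\epsilon}{3}$, whence $\|X-\bar U\bar\Sigma\bar V^{\top}\|_F\le\epsilon$.

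The step I expect to be the main obstacle is precisely this last bounding: making each telescoped term small \emph{without} incurring a spurious $\sqrt{r}$ factor. This is exactly what forces the two structural choices above --- taking the $U,V$-nets among orthonormal-column matrices (so that multiplication by $\bar U$ or $V^{\top}$ acts as a Frobenius isometry) and gauging their accuracy in the column norm $\|\cdot\|_{1,2}$ rather than in the Frobenius or operator norm --- so that the diagonal weights $\sigma_j$ aggregate into $\|\Sigma\|_F$ instead of $\sqrt{r}\,\|\Sigma\|$. A secondary, purely technical point is the passage from an ambient $\|\cdot\|_{1,2}$-ball net to an internal net of the orthonormal-column manifold, which costs only a constant factor in scale and is absorbed into the constant $9$.
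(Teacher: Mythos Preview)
Your proposal is correct and follows essentially the same route as the paper's own proof: SVD factorization, separate $\tfrac{\epsilon}{3}$-discretization of the diagonal factor in Frobenius norm and of the $U,V$ factors in the column norm $\|\cdot\|_{1,2}$, the identical three-term telescoping, and the same product cardinality $(9\sqrt{r}R/\epsilon)^{(2p+1)r}$. If anything you are slightly more careful than the paper about keeping the $U,V$-net points orthonormal (the paper simply takes its net in the ambient $\|\cdot\|_{1,2}$-ball $\hat Q_{p,r}$ and then tacitly uses $\|\bar U(\Sigma-\bar\Sigma)V^{\top}\|_F=\|\Sigma-\bar\Sigma\|_F$), which you correctly flag as a constant-factor technicality.
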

  \begin{proof}The proof follows a closely similar result by \cite[][Lemma 3.1]{Candes2011}. Denote by $X:=U\Sigma V^\top$ the singular value decomposition (SVD)  of a matrix in $S_{r,R}$. Let $D$ be the set of rank-$r$  diagonal matrices with nonnegative diagonal entries and nuclear norm smaller than $R$, and thus any matrix within set $D$ has the Frobenius norm smaller than $\sqrt{r}\cdot R$. We take $\bar D$ be an $\frac{\epsilon}{3}$-net (in terns of Frobenius norm) for $D$ with $\vert \bar D\vert\leq \left(\frac{9\sqrt{r}R}{\epsilon}\right)^r$. 
  
  Let $O_{p,r}:=\{U\in\Re^{p\times r}:\,U^\top U=I\}$. For the convenience of analysis on $O_{p,r}$, we may as well consider $\hat Q_{p,r}:=\{ X\in\Re^{p\times r}:\,\Vert X\Vert_{1,2}\leq 1\}$ and $\Vert X\Vert_{1,2}=\max_j\Vert X_j\Vert$, where $X_j$ denotes the $j$th column of $X$. Verifiably, $ O_{p,r}\subset \hat Q_{p,r}$. We may create an $\frac{\epsilon}{3\sqrt{r}R}$-net for $\hat Q_{p,r}$, denoted by  $\bar O_{p,r}$, which satisfies that $\vert\bar{O}_{p,r}\vert\leq (9\sqrt{r}R/\epsilon)^{pr}$.
  
  For any $X\in S_{r,R}$, one may decompose $X$ and obtain $X=U\Sigma V^\top$. There exists $\bar X=\bar U\bar\Sigma\bar V^\top\in\bar S_{r,R}$ with $\bar U,\,\bar V\in \bar O_{p,r}$, and $\bar \Sigma\in\bar D$ such that $\Vert U-\bar U\Vert_{1,2}\leq \epsilon/(3\sqrt{r}R)$, $\Vert V-\bar V\Vert_{1,2}\leq \epsilon/(3\sqrt{r}R)$, and $\Vert \Sigma-\bar\Sigma\Vert_F\leq \epsilon/3$. This gives $\Vert X-\bar X\Vert_F=\Vert U\Sigma V^\top-\bar U\bar \Sigma\bar V^\top\Vert_F=\Vert U\Sigma V^\top-\bar U\Sigma V^\top+\bar U\Sigma V^\top-\bar U\bar\Sigma V^\top+\bar U\bar \Sigma V^\top-\bar U\bar\Sigma\bar V^\top \Vert_F\leq \Vert (U-\bar U)\Sigma V^\top\Vert_F+\Vert\bar U(\Sigma-\bar\Sigma) V^\top\Vert_F+\Vert\bar U\bar \Sigma(V-\bar V)\Vert_F$. Since $V$ is orthonormal matrix, $\Vert (U-\bar U)\Sigma V^\top\Vert_F=\Vert (U-\bar U)\Sigma \Vert_F=\sqrt{\sum_{1\leq j\leq r}[\sigma_j(X)]^2\cdot\Vert \bar U_j- U_j\Vert_2^2}\leq   \sqrt{ \Vert \Sigma\Vert_F^2\cdot \Vert U-\bar U\Vert_{1,2}^2}\leq \epsilon/3$, where $U_j$ is the $j$th column of $U$.  By a symmetric argument, we may also obtain that $\Vert\bar U\bar \Sigma(V-\bar V)^\top\Vert_F\leq   \epsilon/3$. To bound the second term, we also notice that $\Vert\bar U(\Sigma-\bar\Sigma) V^\top\Vert_F=\Vert\Sigma-\bar\Sigma\Vert_F\leq \epsilon/3$. Combining the above provides the desired result.
  \end{proof}
\end{appendix}



\end{document}